\providecommand{\U}[1]{\protect\rule{.1in}{.1in}}
\newtheorem{theorem}{Theorem}
\theoremstyle{plain}
\newtheorem{corollary}{Corollary}
\newtheorem{lemma}{Lemma}
\newtheorem{proposition}{Proposition}
\newtheorem{remark}{Remark}
\numberwithin{equation}{section}
\begin{document}
\title[Continuants and convergence of certain continued fractions]{Continuants and convergence of certain continued fractions}
\author{Daniel Duverney}
\address{110, rue du chevalier fran\c{c}ais, 59000 Lille, France}
\email{daniel.duverney@orange.fr}
\author{Iekata Shiokawa}
\address{13-43, Fujizuka-cho, Hodogaya-ku, Yokohama 240-0031, Japan}
\email{shiokawa@beige.ocn.ne.jp}
\date{October 18, 2022}
\subjclass{11A55, 15A69}
\keywords{Continuant, determinant, Tietze Theorem, Stolz Theorem, Convergence,
Semi-regular continued fractions, Purely periodic continued fractions.}

\begin{abstract}
We give a concise introduction to the theory of continuants and show how
Perron used them in his proof of Tietze theorem on the convergence of infinite
semi-regular continued fractions, as well as for the study of the convergence
of purely periodic continued fractions.

\end{abstract}
\maketitle

\section{Introduction}

The purpose of this note is to give a short introduction to continuants and
show how they can be used in order to study the convergence of continued
fractions in some cases. Continuants and their relations to continued
fractions were first considered by Spottiswoode in 1856 \cite{Spot}. A few
years later, Nachreiner \cite{Nach} and Muir \cite{Muir} studied them
independently. Muir called them \textit{continuants} for the first
time.\smallskip

First we recall some basic facts about continued fractions \cite{JoTh}. Let
$a_{1},$ $a_{2},\ldots,$ $b_{0},$ $b_{1},$ $b_{2},\ldots$ be infinite
sequences of indeterminates. We define
\begin{equation}
A_{-1}=1,\quad A_{0}=b_{0},\quad B_{-1}=0,\quad B_{0}=1, \label{Int01}%
\end{equation}
and for $n\geq1$%
\begin{align}
A_{n}  &  =b_{n}A_{n-1}+a_{n}A_{n-2},\label{Int1}\\
B_{n}  &  =b_{n}B_{n-1}+a_{n}B_{n-2}. \label{Int2}%
\end{align}
It is well known that%
\begin{equation}
\alpha_{n}:=b_{0}+\frac{a_{1}}{b_{1}}%
\genfrac{}{}{0pt}{}{{}}{+}%
\frac{a_{2}}{b_{2}}%
\genfrac{}{}{0pt}{}{{}}{+\cdots+}%
\frac{a_{n}}{b_{n}}=\frac{A_{n}}{B_{n}}\qquad\left(  n\geq1\right)  ,
\label{Int6}%
\end{equation}
and an easy induction using (\ref{Int1}) and (\ref{Int2}) shows that%
\begin{equation}
A_{n}B_{n-1}-A_{n-1}B_{n}=\left(  -1\right)  ^{n-1}a_{1}a_{2}\cdots
a_{n}\qquad\left(  n\geq1\right)  . \label{Int7}%
\end{equation}
This yields immediately%
\begin{equation}
\frac{A_{n}}{B_{n}}-\frac{A_{n-1}}{B_{n-1}}=\left(  -1\right)  ^{n-1}%
\frac{a_{1}a_{2}\cdots a_{n}}{B_{n-1}B_{n}}\qquad\left(  n\geq1\right)  .
\label{K777}%
\end{equation}
Continuants allow to generalize (\ref{Int1}) and (\ref{Int2}) by expressing
$A_{n+k}$ and $B_{n+k}$ in terms of $A_{n-1},$ $A_{n-2},$ $B_{n-1},$ $B_{n-2}$
for all $k\geq0$. This leads consequently to a generalization of (\ref{Int7})
and (\ref{K777}), as we will see in Section \ref{SecCont}, which consists of a
short introduction to continuants.\smallskip

In Section \ref{SecTietze}, we will show how Perron used continuants in
\cite[Chapter 5]{Per} for proving Theorem \ref{TietzeTh} below, known as
\textit{Tietze theorem}. By Theorem \ref{TietzeTh}, any infinite semi-regular
continued fraction (\cite{Tietze},\cite{Per},\cite{DS}) is convergent. An
alternative proof of Theorem \ref{TietzeTh} (not using continuants) can be
found in \cite{DS2}.

\begin{theorem}
\label{TietzeTh}Assume that the infinite continued fraction
\begin{equation}
\alpha:=b_{0}+\frac{a_{1}}{b_{1}}%
\genfrac{}{}{0pt}{}{{}}{+}%
\frac{a_{2}}{b_{2}}%
\genfrac{}{}{0pt}{}{{}}{+\cdots+}%
\frac{a_{n}}{b_{n}}%
\genfrac{}{}{0pt}{}{{}}{+\cdots}
\label{Int21}%
\end{equation}
satisfies the following conditions:%
\begin{equation}
a_{n}\in\left\{  -1,1\right\}  ,\qquad b_{n}\in\left[  1,+\infty\right[
,\qquad b_{n}+a_{n+1}\geq1\qquad\left(  n\geq1\right)  . \label{Int22}%
\end{equation}
Then $\alpha$ is convergent.
\end{theorem}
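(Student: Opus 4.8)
The plan is to show that the sequence of convergents $\alpha_{n}=A_{n}/B_{n}$ is Cauchy. By (\ref{K777}) and the hypothesis $a_{n}\in\{-1,1\}$ we have $|a_{1}a_{2}\cdots a_{n}|=1$, so once every denominator is positive,
\[
\left\vert \alpha_{n}-\alpha_{n-1}\right\vert =\frac{1}{B_{n-1}B_{n}}.
\]
Hence $\alpha$ converges as soon as (i) every $B_{n}$ is positive and (ii) the series $\sum_{n\geq1}1/(B_{n-1}B_{n})$ converges. Everything is thereby reduced to understanding the growth of the denominators $B_{n}$ defined by (\ref{Int2}).

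The only hypothesis that couples distinct indices is $b_{n}+a_{n+1}\geq1$, so I would look for a quantity built from $B_{n},B_{n-1}$ and $a_{n+1}$ on which this condition bites. Set $T_{n}:=B_{n}+a_{n+1}B_{n-1}$. Substituting (\ref{Int2}) gives $T_{n}=(b_{n}+a_{n+1})B_{n-1}+a_{n}B_{n-2}$, and since $T_{n-1}=B_{n-1}+a_{n}B_{n-2}$ this rearranges to
\[
T_{n}=T_{n-1}+(b_{n}+a_{n+1}-1)B_{n-1}.
\]
Thus, as long as $B_{n-1}\geq0$, the condition $b_{n}+a_{n+1}\geq1$ makes $(T_{n})$ nondecreasing; as $T_{0}=B_{0}+a_{1}B_{-1}=1$, we obtain $T_{n}\geq1$ for every $n$. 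Positivity of the $B_{n}$ then follows by a \emph{joint} induction: assuming $B_{j}>0$ for all $j<n$, the inequality $T_{n-1}\geq1$ is legitimate, and $B_{n}=b_{n}B_{n-1}+a_{n}B_{n-2}$ is manifestly positive when $a_{n}=1$; when $a_{n}=-1$ one has $T_{n-1}=B_{n-1}-B_{n-2}\geq1$, whence $B_{n-1}>B_{n-2}$ and $B_{n}=b_{n}B_{n-1}-B_{n-2}\geq B_{n-1}-B_{n-2}\geq1>0$. This closes the induction and settles (i).

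I expect the real difficulty to be (ii), because $B_{n}$ need \emph{not} be monotone: the admissible data $a_{1}=1$, $b_{1}=1$, $a_{2}=1$, $b_{2}=2$, $a_{3}=-1$, $b_{3}=1$ yield $B_{0},\dots,B_{3}=1,1,3,2$, so $B_{3}<B_{2}$, and the naive telescoping of $1/(B_{n-1}B_{n})$ against $1/B_{n-1}-1/B_{n}$ is unavailable. The remedy is once more $T_{n}$: its increment equals $b_{n}B_{n-1}\geq B_{n-1}$ when $a_{n+1}=1$, and $(b_{n}-2)B_{n-1}\geq0$ when $a_{n+1}=-1$ (the coupling forcing $b_{n}\geq2$ in this case), while in the latter case $T_{n}=B_{n}-B_{n-1}\geq1$ gives $B_{n}\geq B_{n-1}+1$. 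I would use these two facts to prove $B_{n}\to\infty$ and to dominate the series by a convergent telescoping sum of differences $1/B_{k}$ over the maximal ascending runs of $(B_{n})$, the isolated descending steps $a_{n}=-1$ being absorbed through $B_{n-1}>B_{n-2}$. Making this growth estimate precise is the crux of the whole argument; it is exactly the bookkeeping that the continuant identities of Section \ref{SecCont} (the generalizations of (\ref{Int7}) and (\ref{K777})) are designed to streamline, and it is along these lines that Perron completes the proof. Once $\sum_{n\geq1}1/(B_{n-1}B_{n})<\infty$ is established, $(\alpha_{n})$ is Cauchy and $\alpha$ converges.
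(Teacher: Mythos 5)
Your preliminary work is correct: the identity $T_{n}=T_{n-1}+(b_{n}+a_{n+1}-1)B_{n-1}$ for $T_{n}:=B_{n}+a_{n+1}B_{n-1}$ is valid (it is a front-end mirror of Perron's identity (\ref{KKK}) for the tail denominators $B_{k,n}$, exploiting the coupling $b_{n}+a_{n+1}\geq1$ in the same way), the joint induction giving $B_{n}\geq1$ is sound, and so are the facts that $a_{n+1}=-1$ forces $B_{n}\geq B_{n-1}+1$ and that descending steps are isolated. But the proposal stops exactly where the theorem lives. Your reduction needs \emph{two} unproven statements: $B_{n}\rightarrow\infty$ and, much more seriously, the absolute convergence $\sum_{n\geq1}1/(B_{n-1}B_{n})<\infty$; you defer both (``making this growth estimate precise is the crux''). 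This is a genuine gap, not a detail: since $a_{n}\in\{-1,1\}$, the differences $\alpha_{n}-\alpha_{n-1}=(-1)^{n-1}a_{1}\cdots a_{n}/(B_{n-1}B_{n})$ do not alternate in sign (they keep a constant sign throughout each run of $a_{m}=-1$), so within your framework nothing short of absolute summability will do, and establishing it requires a real run analysis that your text does not contain. (It can be done: at the first step $r$ of a maximal $(-1)$-run following a $+1$ step the coupling forces $b_{r}\geq2$, hence $B_{r}\geq2B_{r-1}$ and the nondecreasing within-run differences are $\geq B_{r-1}+B_{r-2}$, so even the drop at the run's end stays $\geq B_{r-1}+B_{r-2}$; thus $B_{n}$ at least doubles across run starts and grows at least linearly inside runs, which yields the summability. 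None of this bookkeeping is in your proposal.)

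You should also know that your target estimate is strictly stronger than necessary, and your closing guess that ``it is along these lines that Perron completes the proof'' is not accurate: the paper never touches $\sum1/(B_{n-1}B_{n})$. Perron's proof instead uses the continuant generalization (\ref{K201}) of (\ref{K777}), which compares $A_{n+k}/B_{n+k}$ with $A_{n-1}/B_{n-1}$ for \emph{all} $k\geq0$ at once, together with Lemma \ref{LemmaT2} ($1\leq B_{n,k}\leq B_{n+k}$, proved by the tail analogue of your $T_{n}$ computation) to obtain the uniform tail bound $\vert A_{n+k}/B_{n+k}-A_{n-1}/B_{n-1}\vert\leq B_{n,k}/(B_{n-1}B_{n+k})\leq1/B_{n-1}$; then Lemma \ref{LemmaT1} (with the quantitative bounds $B_{k+n}\geq k+1$ when $a_{k+1}=1$ and $B_{k}\geq k+1$ when $a_{k+1}=-1$) makes $A_{n}/B_{n}$ Cauchy. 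This is precisely what the two-parameter family $B_{k,n}$ buys: one bound controls an entire tail, sidestepping every question about how the signs and the non-monotone denominators interact. To complete your proof you must either carry out the run decomposition sketched above or switch to the uniform-tail bound; as written, the decisive step is missing.
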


In Section \ref{SecStolz}, we will use continuants for proving the following
result on the convergence of purely periodic continued fractions.

\begin{theorem}
\label{StolzTh}Let $\left(  a_{n}\right)  _{n\geq1}$ and $(b_{n})_{n\geq0}$ be
non-zero complex numbers. Assume that the infinite continued fraction%
\[
\alpha:=b_{0}+\frac{a_{1}}{b_{1}}%
\genfrac{}{}{0pt}{}{{}}{+}%
\frac{a_{2}}{b_{2}}%
\genfrac{}{}{0pt}{}{{}}{+\cdots+}%
\frac{a_{n}}{b_{n}}%
\genfrac{}{}{0pt}{}{{}}{+\cdots}%
\]
is purely periodic with period $p\geq1,$ that is
\[
a_{n+p}=a_{n}\quad\left(  n\geq1\right)  ,\qquad b_{n+p}=b_{n}\quad\left(
n\geq0\right)  .
\]
Let $\lambda_{1}$ and $\lambda_{2}$ with $\left\vert \lambda_{1}\right\vert
\geq\left\vert \lambda_{2}\right\vert $ be the eigenvalues of the matrix%
\[
M:=\left(
\begin{array}
[c]{cc}%
A_{p-1} & a_{p}A_{p-2}\\
B_{p-1} & a_{p}B_{p-2}%
\end{array}
\right)  =\left(
\begin{array}
[c]{cc}%
A_{p-1} & A_{p}-b_{0}A_{p-1}\\
B_{p-1} & B_{p}-b_{0}B_{p-1}%
\end{array}
\right)  .
\]
Then $\alpha$ is convergent if and only if $B_{p-1}\neq0$ and one of the
following conditions holds:

(C1) $\lambda_{1}=\lambda_{2}.$

(C2) $\left\vert \lambda_{1}\right\vert >\left\vert \lambda_{2}\right\vert $
and $A_{q}-x_{2}B_{q}\neq0$ for $0\leq q\leq p-2,$ where $x_{2}$ is defined by%
\begin{equation}
\lambda_{2}=x_{2}B_{p-1}+a_{p}B_{p-2}. \label{X2}%
\end{equation}

Moreover, in case of convergence, $\alpha=x_{1},$ where $x_{1}$ is defined by%
\begin{equation}
\lambda_{1}=x_{1}B_{p-1}+a_{p}B_{p-2}. \label{X1}%
\end{equation}

\end{theorem}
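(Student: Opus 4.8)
The plan is to convert the analytic question of convergence into the linear algebra of the powers $M^{m}$, using one structural identity as the engine. First I would set $P_{n}=\begin{pmatrix}A_{n} & A_{n-1}\\ B_{n} & B_{n-1}\end{pmatrix}$, so that (\ref{Int1})--(\ref{Int2}) read $P_{n}=P_{n-1}R_{n}$ with $R_{n}=\begin{pmatrix}b_{n} & 1\\ a_{n} & 0\end{pmatrix}$ and $P_{0}=\begin{pmatrix}b_{0}&1\\1&0\end{pmatrix}$. Unwinding gives $P_{n}=P_{0}R_{1}\cdots R_{n}$, and periodicity $R_{p+i}=R_{i}$ $(i\ge 1)$ together with the continuant identities of Section~\ref{SecCont} yields the key relation $P_{n+p}=M\,P_{n}$, where $M=P_{p}P_{0}^{-1}$ coincides with the matrix of the statement (using $a_{p}A_{p-2}=A_{p}-b_{0}A_{p-1}$). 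Iterating and reading off the first column gives $\begin{pmatrix}A_{mp+j}\\ B_{mp+j}\end{pmatrix}=M^{m}\begin{pmatrix}A_{j}\\ B_{j}\end{pmatrix}$ for $0\le j\le p-1$. Since $\det M=(-1)^{p}a_{1}\cdots a_{p}\ne 0$ by (\ref{Int7}), both eigenvalues are nonzero, and $\alpha$ converges precisely when each of the $p$ residue subsequences $(\alpha_{mp+j})_{m}$ tends to one common finite limit.

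Next I would record the meaning of the data in spectral terms. Examining the second row of $M-\lambda_{i}I$ shows that an eigenvector $v_{i}$ of $\lambda_{i}$ satisfies $(v_{i})_{1}/(v_{i})_{2}=x_{i}$, so $x_{1},x_{2}$ from (\ref{X1})--(\ref{X2}) are the ratios of the eigenvectors, and $v_{i}$ has nonzero second coordinate exactly when $B_{p-1}\ne 0$; thus under $B_{p-1}\ne 0$ the values $x_{1},x_{2}$ are finite and, when $\lambda_{1}\ne\lambda_{2}$, distinct. This also disposes of the necessity of $B_{p-1}\ne 0$: if $B_{p-1}=0$ then $M$ is upper triangular with $e_{1}=\begin{pmatrix}1\\0\end{pmatrix}$ an eigenvector, whence $\begin{pmatrix}A_{mp-1}\\ B_{mp-1}\end{pmatrix}=M^{m}e_{1}$ has vanishing second coordinate for all $m$ and the convergents $\alpha_{mp-1}$ are undefined, so $\alpha$ diverges. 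The problem now reduces to deciding, for each test vector $w=\begin{pmatrix}A_{j}\\ B_{j}\end{pmatrix}$, whether the ratio of the two coordinates of $M^{m}w$ converges, and to what.

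The heart of the argument is a case analysis on the Jordan type of $M$. If $\left\vert\lambda_{1}\right\vert>\left\vert\lambda_{2}\right\vert$ (case (C2)), diagonalize $M$ and write $M^{m}/\lambda_{1}^{m}$ converging to a rank-one matrix proportional to $v_{1}\ell_{1}^{T}$, where $\ell_{1}$ is the left eigenvector for $\lambda_{1}$ and satisfies $\ell_{1}^{T}v_{2}=0$. Hence the ratio of $M^{m}w$ tends to $x_{1}$ as soon as $\ell_{1}^{T}w\ne 0$, that is $w\not\parallel v_{2}$, that is $A_{j}-x_{2}B_{j}\ne 0$; conversely, if $A_{q}-x_{2}B_{q}=0$ for some $q$ the corresponding subsequence is constantly $x_{2}\ne x_{1}$ and $\alpha$ diverges. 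Here I would verify the bookkeeping behind the range $0\le q\le p-2$: the residue $j=p-1$ contributes the vector $Me_{1}$, and since $(v_{2})_{2}\ne 0$ forces $e_{1}\not\parallel v_{2}$, one has $Me_{1}\not\parallel v_{2}$ automatically, so no condition at $q=p-1$ is needed. If instead $\lambda_{1}=\lambda_{2}$ (case (C1)), then $B_{p-1}\ne 0$ forces $M$ to be a single Jordan block (a scalar $M$ would give $B_{p-1}=0$); from $M^{m}/(m\lambda^{m-1})$ converging to a rank-one matrix whose column space is the unique eigenline, every test direction yields ratio $\to x_{1}$ — a direction already on the eigenline gives the constant value $x_{1}$ — so convergence to $x_{1}$ holds with no further hypothesis, matching (C1).

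Finally I would treat the remaining possibility under $B_{p-1}\ne 0$, namely $\left\vert\lambda_{1}\right\vert=\left\vert\lambda_{2}\right\vert$ with $\lambda_{1}\ne\lambda_{2}$, and show it always diverges, so that the stated list of cases is exhaustive and sharp. Here $\mu:=\lambda_{2}/\lambda_{1}$ has $\left\vert\mu\right\vert=1$ and $\mu\ne 1$, so $\mu^{m}$ does not converge; a test direction off both eigenlines makes the ratio of $M^{m}w$ oscillate, whereas a direction on eigenline $i$ produces the limit $x_{i}$. Since $\begin{pmatrix}b_{0}\\1\end{pmatrix}$ (from $j=0$) and $e_{1}$ (from $j=p-1$) are linearly independent, they cannot both lie on one eigenline, so a hypothetical common limit $L$ is contradicted either by an oscillating subsequence or by two subsequences forcing $L=x_{1}$ and $L=x_{2}$ with $x_{1}\ne x_{2}$. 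Assembling the four cases gives the equivalence, and in every convergent case the common limit is $x_{1}$. I expect the main obstacle to be exactly this exhaustive case-matching — confirming that the range $0\le q\le p-2$ (the automatic role of $j=p-1$), the no-extra-condition Jordan case, and the equal-modulus divergence all align precisely with the hypotheses — rather than any single hard estimate.
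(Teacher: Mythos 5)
Your proposal is correct, and it reaches the theorem along the same skeleton as the paper's proof in Section \ref{SecStolz}: the relation $\left( A_{(n+1)p+q},B_{(n+1)p+q}\right)^{T}=M\left( A_{np+q},B_{np+q}\right)^{T}$ for $0\leq q\leq p-1$, divergence when $B_{p-1}=0$ via the vanishing subsequence $B_{mp-1}$, and then a three-way spectral case analysis of the iteration, which the paper isolates as Lemma \ref{LemPower} (its eigenvectors $(x_{i},1)^{T}$ are exactly your $v_{i}$, and its $\mu_{1}=0\Leftrightarrow u_{0}-v_{0}x_{2}=0$ is your $w\parallel v_{2}$ criterion). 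You arrive at the two key ingredients by a different route, however. First, the paper derives (\ref{S1})--(\ref{S2}) from continuants — Proposition \ref{PropOffer} together with the observation that $A_{p,np+q}=A_{np+q}$ and $B_{p,np+q}=B_{np+q}$ by $p$-periodicity of the continuant entries — whereas you obtain the cocycle $P_{n+p}=MP_{n}$ with $M=P_{p}P_{0}^{-1}$ directly from the transfer-matrix factorization $P_{n}=P_{0}R_{1}\cdots R_{n}$; your passing appeal to the \emph{continuant identities of Section \ref{SecCont}} is in fact not needed there. This is shorter and makes $\det M=(-1)^{p}a_{1}\cdots a_{p}$ and the second expression for $M$ transparent, though it bypasses precisely the continuant machinery the paper is showcasing. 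Second, where the paper checks the distinguished residue $q=p-1$ by the trace computations (\ref{Init11})--(\ref{Init12}), namely $A_{p-1}-x_{1}B_{p-1}=\lambda_{2}\neq0$ and $A_{p-1}-x_{2}B_{p-1}=\lambda_{1}\neq0$, you argue geometrically that $\left( A_{p-1},B_{p-1}\right)^{T}=Me_{1}$ with $e_{1}=(1,0)^{T}$ off both eigenlines (eigenvectors have nonzero second coordinate once $B_{p-1}\neq0$) and that eigenlines are $M$-invariant; this is equivalent and somewhat more conceptual, and your explicit remark that a scalar $M$ would force $B_{p-1}=0$ makes precise what the paper's Lemma \ref{LemPower}(iii) uses implicitly through the hypothesis $b\neq0$. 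Two polish points, neither a gap: in the equal-modulus case your two-vector argument with $j=0$ and $j=p-1$ is more roundabout than necessary, since your own observation already shows $Me_{1}$ lies off both eigenlines, so the single subsequence $q=p-1$ oscillates — which is exactly the paper's argument; and \emph{oscillates} deserves the one-line justification the paper supplies in Lemma \ref{LemPower}(ii): either $v_{n}=0$ infinitely often (convergents undefined, which both you and the paper count as divergence) or the ratio is the injective M\"{o}bius image of the non-convergent sequence $e^{-in\theta}$.
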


Theorem \ref{StolzTh} was first proved by Stolz \cite{Stolz}. The basic idea
of the proof we give in Section \ref{SecStolz} is due to Perron \cite[Page
83]{Per2}, although Perron doesn't use matrix calculations. It consist in
computing the values of $A_{n}$ and $B_{n}$ for all $n\geq0$ in function of
$\lambda_{1}$ and $\lambda_{2}.$ An alternative treatment, using the
properties of linear fractional transformations, can be found in \cite[Section
3.2]{JoTh}.\smallskip

Finally, in Section \ref{SecGalois1}, we will deduce from Theorem
\ref{StolzTh} a short proof of Theorem \ref{ThGal} below, known as
\textit{Galois generalized theorem} \cite[Theorem 3.4]{JoTh}. Recall that the
original Galois theorem applies to regular continued fractions (\cite{G}%
,\cite[Satz 3.6]{Per}, \cite[Exercise 4.6]{Duv}). Let
\begin{equation}
\alpha:=b_{0}+\frac{a_{1}}{b_{1}}%
\genfrac{}{}{0pt}{}{{}}{+\cdots+}%
\frac{a_{p-1}}{b_{p-1}}%
\genfrac{}{}{0pt}{}{{}}{+}%
\frac{a_{p}}{b_{0}}%
\genfrac{}{}{0pt}{}{{}}{+}%
\frac{a_{1}}{b_{1}}%
\genfrac{}{}{0pt}{}{{}}{+\cdots+}%
\frac{a_{p-1}}{b_{p-1}}%
\genfrac{}{}{0pt}{}{{}}{+}%
\frac{a_{p}}{b_{0}}%
\genfrac{}{}{0pt}{}{{}}{+\cdots}
\label{Alpha}%
\end{equation}
be a purely periodic continued fraction, and let%
\begin{equation}
\alpha^{\prime}:=b_{0}+\frac{a_{p}}{b_{p-1}}%
\genfrac{}{}{0pt}{}{{}}{+\cdots+}%
\frac{a_{2}}{b_{1}}%
\genfrac{}{}{0pt}{}{{}}{+}%
\frac{a_{1}}{b_{0}}%
\genfrac{}{}{0pt}{}{{}}{+}%
\frac{a_{p}}{b_{p-1}}%
\genfrac{}{}{0pt}{}{{}}{+\cdots+}%
\frac{a_{2}}{b_{1}}%
\genfrac{}{}{0pt}{}{{}}{+}%
\frac{a_{1}}{b_{0}}%
\genfrac{}{}{0pt}{}{{}}{+\cdots}%
, \label{AlphaP}%
\end{equation}
its reverse continued fraction. Note that $\alpha^{\prime}$ is purely
periodic. Define $a_{n}^{\prime}$ and $b_{n}^{\prime}$ $(n\geq1)$ by%
\[
\alpha^{\prime}=b_{0}^{\prime}+\frac{a_{1}^{\prime}}{b_{1}^{\prime}}%
\genfrac{}{}{0pt}{}{{}}{+}%
\frac{a_{2}^{\prime}}{b_{2}^{\prime}}%
\genfrac{}{}{0pt}{}{{}}{+\cdots+}%
\frac{a_{n}^{\prime}}{b_{n}^{\prime}}%
\genfrac{}{}{0pt}{}{{}}{+\cdots}%
\]
and $A_{n}^{\prime},$ $B_{n}^{\prime}$ $(n\geq-1)$ by $A_{-1}^{\prime}=1,$
$A_{0}^{\prime}=b_{0}^{\prime}=b_{0},$ $B_{-1}^{\prime}=0,$ $A_{-1}^{\prime
}=1$ and%
\[
A_{n}^{\prime}=b_{n}^{\prime}A_{n-1}^{\prime}+a_{n}^{\prime}A_{n-2}^{\prime
},\quad B_{n}^{\prime}=b_{n}^{\prime}B_{n-1}^{\prime}+a_{n}^{\prime}%
B_{n-2}^{\prime}\quad\left(  n\geq1\right)  .
\]

\begin{theorem}
\label{ThGal}Assume that the purely periodic continued fraction (\ref{Alpha})
is convergent, and let $\lambda_{1},$ $\lambda_{2},$ $x_{1},$ and $x_{2}$ be
as in Theorem \ref{StolzTh}. Then $\alpha=x_{1}$ and its reverse continued
fraction (\ref{AlphaP}) converges to $b_{0}-x_{2}$, except if $\left\vert
\lambda_{1}\right\vert >\left\vert \lambda_{2}\right\vert $ and there exists
$q\in\left\{  0,\ldots,p-2\right\}  $ such that%
\begin{equation}
A_{q}^{\prime}-\left(  b_{0}-x_{1}\right)  B_{q}^{\prime}=0 \label{Cond123}%
\end{equation}
in which case it is divergent.
\end{theorem}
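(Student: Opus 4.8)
The plan is to apply Theorem \ref{StolzTh} to the reverse continued fraction $\alpha'$ and relate its convergents $A_n', B_n'$ to the original $A_n, B_n$ via a "reversal" identity for continuants. The key structural fact I would exploit is that the convergents of a continued fraction and of its reverse are governed by transpose-related matrices. Concretely, writing the one-step transfer matrices and taking the product in reverse order, I expect that the matrix $M' = \bigl(\begin{smallmatrix} A_{p-1}' & a_p' A_{p-2}' \\ B_{p-1}' & a_p' B_{p-2}' \end{smallmatrix}\bigr)$ associated with $\alpha'$ is conjugate (or transpose-conjugate) to $M$, so that $M$ and $M'$ share the same eigenvalues $\lambda_1, \lambda_2$. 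This is what lets me import the ratio $|\lambda_1| \geq |\lambda_2|$ and the dominant/subdominant dichotomy directly, and it is the crux that makes the whole theorem go through.

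**Next I would** determine the new fixed points. By Theorem \ref{StolzTh} applied to $\alpha'$, convergence gives $\alpha' = x_1'$ where $\lambda_1 = x_1' B_{p-1}' + a_p' B_{p-2}'$, and the claim $\alpha' = b_0 - x_2$ amounts to showing $x_1' = b_0 - x_2$. I would establish the reversal identities expressing $A_{p-1}', B_{p-1}', a_p'B_{p-2}', a_p'A_{p-2}'$ in terms of the unprimed continuants; these come from the symmetry of continuants under reversing the index order (the continuant $K(c_1,\ldots,c_n)$ equals $K(c_n,\ldots,c_1)$), which is exactly the kind of identity developed in the continuant section. From these I would read off how the defining equations \eqref{X1}, \eqref{X2} for $x_1, x_2$ transform into the corresponding equations for $x_1', x_2'$, and verify the clean correspondence $x_1' = b_0 - x_2$, $x_2' = b_0 - x_1$. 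The assumed convergence of \eqref{Alpha} guarantees (via Theorem \ref{StolzTh}) that $B_{p-1} \neq 0$ and that either (C1) or (C2) holds, which I must transfer to the primed data.

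**The main obstacle** will be the exceptional case: showing that $\alpha'$ converges precisely when condition \eqref{Cond123} fails. In the equal-eigenvalue case (C1) the reverse fraction converges automatically, so the content is entirely in case (C2), $|\lambda_1| > |\lambda_2|$. Here Theorem \ref{StolzTh} tells me $\alpha'$ converges if and only if $B_{p-1}' \neq 0$ and $A_q' - x_2' B_q' \neq 0$ for $0 \leq q \leq p-2$. Using $x_2' = b_0 - x_1$, the condition $A_q' - x_2' B_q' \neq 0$ becomes $A_q' - (b_0 - x_1) B_q' \neq 0$, which is exactly the negation of \eqref{Cond123}. So the heart of the argument is (i) confirming $B_{p-1}' \neq 0$ follows from the convergence hypothesis on $\alpha$ and the shared-spectrum relation, and (ii) matching the substitution $x_2' = b_0 - x_1$ correctly against the eigenvalue conventions, being careful that the roles of $\lambda_1$ and $\lambda_2$ are not inadvertently swapped when passing to the transpose/reverse system.

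**Finally I would** assemble these pieces: convergence of $\alpha$ plus the spectral identification yields $\alpha = x_1$ directly from Theorem \ref{StolzTh}; the reversal identities plus Theorem \ref{StolzTh} applied to $\alpha'$ yield that $\alpha' = x_1' = b_0 - x_2$ whenever the exceptional condition \eqref{Cond123} does not occur, and that $\alpha'$ diverges when it does. I would keep the verification of the continuant reversal symmetry as the one genuinely computational lemma, citing the continuant identities from Section \ref{SecCont}, and present the eigenvalue-matching as a short linear-algebra observation about $M$ and its transpose sharing eigenvalues while the eigenvector components supply the fixed-point substitutions $x_1 \mapsto b_0 - x_2$, $x_2 \mapsto b_0 - x_1$.
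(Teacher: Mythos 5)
Your proposal follows essentially the same route as the paper: the continuant reversal symmetry (Proposition \ref{PropInv}) giving $A_{p-1}'=B_p$, $B_{p-1}'=B_{p-1}$, $B_p'=A_{p-1}$, the observation that $M'$ and $M$ share trace and determinant (hence eigenvalues), the fixed-point correspondence $x_1'=b_0-x_2$, $x_2'=b_0-x_1$ (with the $\lambda_1,\lambda_2$ roles swapped exactly as you anticipated, since $A_{p-1}-x_1B_{p-1}=\lambda_2$ by the trace identity), and then a direct application of Theorem \ref{StolzTh} to $\alpha'$, including the identification of the exceptional condition (\ref{Cond123}) with the failure of (C2) for the primed data. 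Your plan is correct and matches the paper's proof in all essentials.
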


\begin{corollary}
\label{CorGal}Assume that the $a_{n}$ $\left(  n\geq1\right)  $ and $b_{n}$
$(n\geq0)$ are non-zero integers and that the continued fraction (\ref{Alpha})
converges to an irrational $\alpha$\footnote{The example $\alpha:=2-\frac
{1}{2}%
\genfrac{}{}{0pt}{}{{}}{-}%
\frac{1}{2}%
\genfrac{}{}{0pt}{}{{}}{-\cdots-}%
\frac{1}{2}%
\genfrac{}{}{0pt}{}{{}}{-\cdots}%
=1$ shows that $\alpha$ can be rational.}. Then $\alpha$ is quadratic and%
\[
-\alpha^{\ast}=\alpha^{\prime}-b_{0}=\frac{a_{p}}{b_{p-1}}%
\genfrac{}{}{0pt}{}{{}}{+\cdots+}%
\frac{a_{2}}{b_{1}}%
\genfrac{}{}{0pt}{}{{}}{+}%
\frac{a_{1}}{b_{0}}%
\genfrac{}{}{0pt}{}{{}}{+}%
\frac{a_{p}}{b_{p-1}}%
\genfrac{}{}{0pt}{}{{}}{+\cdots+}%
\frac{a_{2}}{b_{1}}%
\genfrac{}{}{0pt}{}{{}}{+}%
\frac{a_{1}}{b_{0}}%
\genfrac{}{}{0pt}{}{{}}{+\cdots}%
,
\]
where $\alpha^{\ast}$ is the conjugate of $\alpha.$
\end{corollary}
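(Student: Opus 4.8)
The plan is to deduce Corollary \ref{CorGal} directly from Theorem \ref{ThGal} and Theorem \ref{StolzTh}.

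The plan is to deduce Corollary \ref{CorGal} from Theorems \ref{StolzTh} and \ref{ThGal}, with the integrality and irrationality hypotheses supplying the only extra input. First I would pin down the algebraic meaning of $x_1$ and $x_2$. Testing the vector $\binom{x_i}{1}$ against $M$ and using the defining relations (\ref{X1}), (\ref{X2}), namely $\lambda_i=x_iB_{p-1}+a_pB_{p-2}$, the second coordinate of $M\binom{x_i}{1}=\lambda_i\binom{x_i}{1}$ holds by definition, so $\binom{x_i}{1}$ is exactly the eigenvector of $M$ for $\lambda_i$. Comparing first coordinates then gives $A_{p-1}x_i+a_pA_{p-2}=\lambda_ix_i=(x_iB_{p-1}+a_pB_{p-2})x_i$, whence both $x_1$ and $x_2$ are roots of
\[
P(z):=B_{p-1}z^{2}+(a_pB_{p-2}-A_{p-1})z-a_pA_{p-2}.
\]

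Next I would invoke integrality. Since the $a_n$ and $b_n$ are integers, the recurrences (\ref{Int1})--(\ref{Int2}) make $A_{p-1},A_{p-2},B_{p-1},B_{p-2}$ integers, so $P$ has integer coefficients; and convergence gives $B_{p-1}\neq0$ by Theorem \ref{StolzTh}, so $P$ has degree $2$ and $\alpha=x_1$ is a root of it. As $\alpha$ is irrational, $P$ must be a constant multiple of its minimal polynomial; hence $\alpha$ is quadratic, its two conjugates are the two roots $x_1=\alpha$ and $x_2=\alpha^{\ast}$, and in particular $x_1\neq x_2$. Because $\lambda_1=\lambda_2$ would force $x_1=x_2$, this excludes case (C1) and places us in case (C2), $\left\vert\lambda_1\right\vert>\left\vert\lambda_2\right\vert$.

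Finally I would apply Theorem \ref{ThGal} and discard its exceptional branch. That theorem gives that the reverse continued fraction $\alpha^{\prime}$ converges to $b_0-x_2=b_0-\alpha^{\ast}$ unless $\left\vert\lambda_1\right\vert>\left\vert\lambda_2\right\vert$ and (\ref{Cond123}) holds for some $q\in\{0,\ldots,p-2\}$. The partial quotients of $\alpha^{\prime}$ read off from (\ref{AlphaP}) are a permutation of the integers $a_n,b_n$, so $A_q^{\prime},B_q^{\prime}$ are integers, and the analogue of (\ref{Int7}) for the primed sequences, $A_q^{\prime}B_{q-1}^{\prime}-A_{q-1}^{\prime}B_q^{\prime}=(-1)^{q-1}a_1^{\prime}\cdots a_q^{\prime}\neq0$, shows $A_q^{\prime}$ and $B_q^{\prime}$ never vanish together. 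If $B_q^{\prime}=0$ then $A_q^{\prime}\neq0$ and (\ref{Cond123}) fails; if $B_q^{\prime}\neq0$, then (\ref{Cond123}) would give $b_0-\alpha=b_0-x_1=A_q^{\prime}/B_q^{\prime}\in\mathbb{Q}$, contradicting the irrationality of $\alpha$. Thus the exceptional case cannot occur, and $\alpha^{\prime}-b_0=-x_2=-\alpha^{\ast}$, which is the assertion. I expect the only genuinely delicate step to be the identification $x_2=\alpha^{\ast}$ via the polynomial $P$ above; once that is in place, the irrationality hypothesis mechanically rules out the divergent branch of Theorem \ref{ThGal}.
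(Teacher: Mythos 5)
Your overall route matches the paper's: the paper likewise proves the corollary by feeding Theorem \ref{ThGal} with the two facts that irrationality of $\alpha=x_{1}$ kills the exceptional condition (\ref{Cond123}), and that conjugation over $\mathbb{Q}$ identifies $x_{2}$ with $\alpha^{\ast}$. The paper obtains the second fact slightly differently: it observes that $\lambda_{1}=x_{1}B_{p-1}+a_{p}B_{p-2}$ is irrational by (\ref{X1}), hence a quadratic irrational whose conjugate is $\lambda_{2}$ because the characteristic polynomial of the integer matrix $M$ has integer coefficients, and then transfers conjugation back through (\ref{X2}); your quadratic $P(z)=B_{p-1}z^{2}+(a_{p}B_{p-2}-A_{p-1})z-a_{p}A_{p-2}$ is exactly the characteristic polynomial of $M$ pulled back along the affine map $z\mapsto B_{p-1}z+a_{p}B_{p-2}$, so the two computations are equivalent in substance. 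Your treatment of (\ref{Cond123}) is actually more complete than the paper's one-line dismissal, since you cover the subcase $B_{q}^{\prime}=0$ via the determinant identity; the paper leaves that to the reader.

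One step is circular as written. You assert that the two conjugates of $\alpha$ ``are the two roots $x_{1}=\alpha$ and $x_{2}=\alpha^{\ast}$, and in particular $x_{1}\neq x_{2}$,'' and you then exclude case (C1) on the grounds that $\lambda_{1}=\lambda_{2}$ would force $x_{1}=x_{2}$. But identifying $x_{2}$ with the root of $P$ \emph{other than} $x_{1}$ already presupposes $x_{1}\neq x_{2}$, i.e.\ $\lambda_{1}\neq\lambda_{2}$ --- exactly what you are trying to rule out; a priori you only know that each of $x_{1},x_{2}$ is a root of $P$, and they could coincide. The repair costs one line, in either of two ways: (a) Vieta --- since $\lambda_{1}+\lambda_{2}=\operatorname{tr}M=A_{p-1}+a_{p}B_{p-2}$, the defining relations (\ref{X1}) and (\ref{X2}) give $x_{1}+x_{2}=(A_{p-1}-a_{p}B_{p-2})/B_{p-1}$, which is precisely the sum of the two roots of $P$, so $x_{2}$ is the second root $\alpha^{\ast}$ whether or not distinctness is known in advance; or (b) the paper's route --- $\lambda_{1}=B_{p-1}\alpha+a_{p}B_{p-2}$ is irrational, while a repeated eigenvalue of an integer matrix equals $\operatorname{tr}M/2\in\mathbb{Q}$, so $\lambda_{1}\neq\lambda_{2}$ and hence $x_{1}\neq x_{2}$. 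With either patch your argument is complete and correct, and otherwise follows the paper's proof step for step.
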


Corollary \ref{CorGal} will also be proved in Section \ref{SecGalois1}. It
applies, for example, to any purely periodic semi-regular continued fraction
$\alpha.$ In the case of a regular continued fraction, i.e. $a_{n}=1$ for all
$n\geq1,$%
\[
-\frac{1}{\alpha^{\ast}}=b_{p-1}+\frac{1}{b_{p-2}}%
\genfrac{}{}{0pt}{}{{}}{+\cdots+}%
\frac{1}{b_{1}}%
\genfrac{}{}{0pt}{}{{}}{+}%
\frac{1}{b_{0}}%
\genfrac{}{}{0pt}{}{{}}{+}%
\frac{1}{b_{p-1}}%
\genfrac{}{}{0pt}{}{{}}{+\cdots+}%
\frac{1}{b_{1}}%
\genfrac{}{}{0pt}{}{{}}{+}%
\frac{1}{b_{0}}%
\genfrac{}{}{0pt}{}{{}}{+\cdots}%
.
\]
This is Galois Theorem. Similarly, in the case of a negative continued
fraction, i.e. $a_{n}=-1$ for all $n\geq1$,%
\[
\frac{1}{\alpha^{\ast}}=b_{p-1}-\frac{1}{b_{p-2}}%
\genfrac{}{}{0pt}{}{{}}{-\cdots-}%
\frac{1}{b_{1}}%
\genfrac{}{}{0pt}{}{{}}{-}%
\frac{1}{b_{0}}%
\genfrac{}{}{0pt}{}{{}}{-}%
\frac{1}{b_{p-1}}%
\genfrac{}{}{0pt}{}{{}}{-\cdots-}%
\frac{1}{b_{1}}%
\genfrac{}{}{0pt}{}{{}}{-}%
\frac{1}{b_{0}}%
\genfrac{}{}{0pt}{}{{}}{+\cdots}%
,
\]
which has been proved by M\"{o}bius (\cite{Mo},\cite[Satz 19]{Z}).

\section{Continuants}

\label{SecCont}We call \textit{continuant} \cite{Muir} any determinant of the
form%
\begin{equation}
K\binom{a_{1},\ldots,a_{n}}{b_{0},\ldots,b_{n}}:=\left\vert
\begin{array}
[c]{ccccc}%
b_{0} & -1 & 0 & \cdots & 0\\
a_{1} & b_{1} & -1 & \ddots & \vdots\\
0 & a_{2} & b_{2} & \ddots & 0\\
\vdots &  & \ddots & \ddots & -1\\
0 & \cdots & 0 & a_{n} & b_{n}%
\end{array}
\right\vert \qquad\left(  n\geq0\right)  , \label{K1}%
\end{equation}
with for $n=0$ the notation%
\begin{equation}
K\binom{\ast}{b_{0}}:=b_{0}. \label{K2}%
\end{equation}
Developing with respect to the last line yields for $n\geq0$%
\begin{equation}
K\binom{a_{1},\ldots,a_{n+2}}{b_{0},\ldots,b_{n+2}}=b_{n+2}K\binom
{a_{1},\ldots,a_{n+1}}{b_{0},\ldots,b_{n+1}}+a_{n+2}K\binom{a_{1},\ldots
,a_{n}}{b_{0},\ldots,b_{n}}, \label{K3}%
\end{equation}
Developing with respect to the first column yields for $n\geq0$%
\begin{equation}
K\binom{a_{1},\ldots,a_{n+2}}{b_{0},\ldots,b_{n+2}}=b_{0}K\binom{a_{2}%
,\ldots,a_{n+2}}{b_{1},\ldots,b_{n+2}}+a_{1}K\binom{a_{3},\ldots,a_{n+2}%
}{b_{2},\ldots,b_{n+2}}. \label{K4}%
\end{equation}
Let $A_{n}$ and $B_{n}$ be defined in (\ref{Int01}), (\ref{Int1}) and
(\ref{Int2}). It is clear from (\ref{K1}) that%
\begin{align}
K\binom{\ast}{b_{0}}  &  =b_{0}=A_{0},\qquad K\binom{a_{1}}{b_{0},b_{1}}%
=a_{1}+b_{0}b_{1}=A_{1},\label{K55}\\
K\binom{\ast}{b_{1}}  &  =b_{1}=B_{1},\qquad K\binom{a_{2}}{b_{1},b_{2}}%
=a_{2}+b_{1}b_{2}=B_{2}. \label{K56}%
\end{align}
Hence an easy induction using (\ref{K3}) shows that%
\begin{align}
A_{n}  &  =K\binom{a_{1},\ldots,a_{n}}{b_{0},\ldots,b_{n}}\qquad\left(
n\geq0\right)  ,\label{K8}\\
B_{n}  &  =K\binom{a_{2},\ldots,a_{n}}{b_{1},\ldots,b_{n}}\qquad\left(
n\geq1\right)  . \label{K9}%
\end{align}
As a first application, we have by following \cite[Page 9]{Per}:

\begin{proposition}
\label{PropInv}Let $n\geq0.$ Define $A_{n},$ $B_{n},$ $A_{n}^{\prime}$ and
$B_{n}^{\prime}$ by%
\begin{align*}
\frac{A_{n}}{B_{n}}  &  =b_{0}+\frac{a_{1}}{b_{1}}%
\genfrac{}{}{0pt}{}{{}}{+}%
\frac{a_{2}}{b_{2}}%
\genfrac{}{}{0pt}{}{{}}{+\cdots+}%
\frac{a_{n}}{b_{n}},\\
\frac{A_{n}^{\prime}}{B_{n}^{\prime}}  &  =b_{n}+\frac{a_{n}}{b_{n-1}}%
\genfrac{}{}{0pt}{}{{}}{+}%
\frac{a_{n-1}}{b_{n-2}}%
\genfrac{}{}{0pt}{}{{}}{+\cdots+}%
\frac{a_{1}}{b_{0}}.
\end{align*}
Then for all $n\geq1,$%
\begin{equation}
A_{n}^{\prime}=A_{n},\quad B_{n}^{\prime}=A_{n-1},\quad A_{n-1}^{\prime}%
=B_{n},\quad B_{n-1}^{\prime}=B_{n-1}. \label{Rel}%
\end{equation}

\end{proposition}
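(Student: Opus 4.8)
The plan is to reduce everything to a single \emph{mirror identity} for continuants, namely that reversing the order of all the indeterminates leaves a continuant unchanged:
\begin{equation*}
K\binom{a_{1},\ldots,a_{n}}{b_{0},\ldots,b_{n}}=K\binom{a_{n},\ldots,a_{1}}{b_{n},\ldots,b_{0}}\qquad(n\geq0).
\end{equation*}
Once this is available, the four relations in (\ref{Rel}) follow by writing each of $A_{n}^{\prime}$, $B_{n}^{\prime}$, $A_{n-1}^{\prime}$, $B_{n-1}^{\prime}$ as a continuant via (\ref{K8}) and (\ref{K9}) and then applying the mirror identity.

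To prove the mirror identity I would argue at the level of the defining matrix in (\ref{K1}). That matrix is tridiagonal with diagonal $b_{0},\ldots,b_{n}$, superdiagonal entries all equal to $-1$, and subdiagonal entries $a_{1},\ldots,a_{n}$. Its determinant is unchanged under transposition and also unchanged under the simultaneous reversal of the order of rows and columns (conjugation by the order-reversing permutation matrix, whose effect on the determinant is trivial). Performing both operations sends the diagonal to $b_{n},\ldots,b_{0}$, leaves the superdiagonal equal to $-1$, and turns the subdiagonal into $a_{n},\ldots,a_{1}$; this is exactly the matrix defining $K\binom{a_{n},\ldots,a_{1}}{b_{n},\ldots,b_{0}}$, which proves the identity. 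Alternatively one could prove it by induction, playing the two expansions (\ref{K3}) and (\ref{K4}) against each other, but the matrix argument is cleaner.

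For the deduction, the only point requiring care is the bookkeeping of the reversed fraction. Introducing $\tilde{b}_{k}=b_{n-k}$ and $\tilde{a}_{k}=a_{n-k+1}$, the convergent $A_{k}^{\prime}/B_{k}^{\prime}$ of the reversed fraction is the $k$-th convergent of $\tilde{b}_{0}+\tilde{a}_{1}/\tilde{b}_{1}+\cdots$, so (\ref{K8}) and (\ref{K9}) give $A_{n}^{\prime}=K\binom{a_{n},\ldots,a_{1}}{b_{n},\ldots,b_{0}}$, $B_{n}^{\prime}=K\binom{a_{n-1},\ldots,a_{1}}{b_{n-1},\ldots,b_{0}}$, $A_{n-1}^{\prime}=K\binom{a_{n},\ldots,a_{2}}{b_{n},\ldots,b_{1}}$, and $B_{n-1}^{\prime}=K\binom{a_{n-1},\ldots,a_{2}}{b_{n-1},\ldots,b_{1}}$. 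Applying the mirror identity to each of these four continuants and comparing with (\ref{K8}) and (\ref{K9}) yields $A_{n}^{\prime}=A_{n}$, $B_{n}^{\prime}=A_{n-1}$, $A_{n-1}^{\prime}=B_{n}$, and $B_{n-1}^{\prime}=B_{n-1}$, which is (\ref{Rel}).

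The main obstacle I anticipate is not the matrix computation but getting the index conventions right: one must correctly identify which truncation of the reversed continued fraction produces $A_{n-1}^{\prime}$ and $B_{n-1}^{\prime}$ (these are the $(n-1)$-st convergents in the reversed variables, i.e. the continuants on $a_{n},\ldots,a_{2}$ and $a_{n-1},\ldots,a_{2}$ respectively), since an off-by-one error there would spoil the matching. Everything else is forced once the mirror identity is in hand.
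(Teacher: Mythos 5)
Your proof is correct and takes essentially the same route as the paper: both express $A_{n}^{\prime}$, $B_{n}^{\prime}$, $A_{n-1}^{\prime}$, $B_{n-1}^{\prime}$ as continuants via (\ref{K8}) and (\ref{K9}) (your index bookkeeping matches the paper's exactly) and then invoke the invariance of the continuant under reversal, which is the paper's ``symmetry with respect to the second diagonal.'' The one difference is in your favor: you actually prove that mirror identity, by combining transposition with simultaneous row-and-column reversal (conjugation by the reversal permutation $J$, with $\det(JA^{T}J)=\det A$), whereas the paper merely asserts it --- its footnote even asks for a reference --- so your argument supplies the detail the paper leaves out.
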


\begin{proof}
We have by (\ref{K8}) and (\ref{K9})%
\begin{align*}
A_{n}^{\prime}  &  =K\binom{a_{n},\ldots,a_{1}}{b_{n},\ldots,b_{0}}%
=K\binom{a_{1},\ldots,a_{n}}{b_{0},\ldots,b_{n}}=A_{n},\\
B_{n}^{\prime}  &  =K\binom{a_{n-1},\ldots,a_{1}}{b_{n-1},\ldots,b_{0}%
}=K\binom{a_{1},\ldots,a_{n-1}}{b_{0},\ldots,b_{n-1}}=A_{n-1},\\
A_{n-1}^{\prime}  &  =K\binom{a_{n},\ldots,a_{2}}{b_{n},\ldots,b_{1}}%
=K\binom{a_{2},\ldots,a_{n}}{b_{1},\ldots,b_{n}}=B_{n},\\
B_{n-1}^{\prime}  &  =K\binom{a_{n-1},\ldots,a_{2}}{b_{n-1},\ldots,b_{1}%
}=K\binom{a_{2},\ldots,a_{n-1}}{b_{1},\ldots,b_{n-1}}=B_{p-1},
\end{align*}
since a determinant is unchanged by symmetry with respect to its second
diagonal\footnote{Up to now, I didn't know this result... I could find by
myself an elementary proof of it, but I have no reference. Do you know one?}.
\end{proof}

More generally, define%
\begin{equation}
\alpha_{k,n}:=b_{k}+\frac{a_{k+1}}{b_{k+1}}%
\genfrac{}{}{0pt}{}{{}}{+\cdots+}%
\frac{a_{k+n}}{b_{k+n}}=\frac{A_{k,n}}{B_{k,n}}\qquad\left(  k\geq
0,n\geq1\right)  . \label{K10}%
\end{equation}
Then $\alpha_{0,n}=\alpha_{n},$ $A_{0,n}=A_{n},$ $B_{0,n}=B_{n}$ and
\begin{equation}
A_{k,-1}=1,\quad A_{k,0}=b_{k},\quad B_{k,-1}=0,\quad B_{k,0}=1. \label{K5}%
\end{equation}
Moreover, by (\ref{K8}) and (\ref{K9})
\begin{align}
A_{k,n}  &  =K\binom{a_{k+1},\ldots,a_{k+n}}{b_{k},\ldots,b_{k+n}}%
\qquad\left(  k\geq0,n\geq0\right)  ,\label{K88}\\
B_{k,n}  &  =K\binom{a_{k+2},\ldots,a_{k+n}}{b_{k+1},\ldots,b_{k+n}}%
\qquad\left(  k\geq0,n\geq1\right)  . \label{K89}%
\end{align}
As $B_{k,0}=A_{k+1,-1}=1,$ this yields immediately%
\begin{equation}
B_{k,n}=A_{k+1,n-1}\qquad\left(  k\geq0,n\geq0\right)  . \label{K11}%
\end{equation}
Clearly $A_{k,1}=b_{k}A_{k+1,0}+a_{k+1}A_{k+2,-1}$ by (\ref{K55}) and
(\ref{K5}). Therefore replacing $a_{n}$ and $b_{n}$ by $a_{k+n}$ and $b_{k+n}$
in (\ref{K4}) yields
\begin{equation}
A_{k,n+2}=b_{k}A_{k+1,n+1}+a_{k+1}A_{k+2,n}\qquad\left(  k\geq0,n\geq
-1\right)  . \label{K15}%
\end{equation}
So by using (\ref{K11}), (\ref{K56}) and (\ref{K5})
\begin{equation}
B_{k,n+2}=b_{k+1}B_{k+1,n+1}+a_{k+2}B_{k+2,n}\qquad\left(  k\geq
0,n\geq-1\right)  . \label{K15b}%
\end{equation}

\begin{proposition}
\label{PropOffer}If $k\geq0$ and $n\geq1,$ then%
\begin{align}
A_{n+k}  &  =A_{n,k}A_{n-1}+a_{n}B_{n,k}A_{n-2},\label{K100}\\
B_{n+k}  &  =A_{n,k}B_{n-1}+a_{n}B_{n,k}B_{n-2}, \label{K101}%
\end{align}

\end{proposition}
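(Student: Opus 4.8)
The plan is to fix $n\geq 1$ and prove the two identities (\ref{K100}) and (\ref{K101}) simultaneously by induction on $k\geq 0$. The engine of the induction is the observation that, for fixed $n$, the shifted continuants $A_{n,k}$ and $B_{n,k}$ obey the same second-order recurrence in the length parameter $k$ as ordinary convergents. Applying the last-row expansion (\ref{K3}) to the continuant representations (\ref{K88}) and (\ref{K89}) (exactly the relabeling already used to pass from (\ref{K4}) to (\ref{K15})) gives, for all $k\geq 1$,
\begin{align*}
A_{n,k} &= b_{n+k}A_{n,k-1}+a_{n+k}A_{n,k-2},\\
B_{n,k} &= b_{n+k}B_{n,k-1}+a_{n+k}B_{n,k-2},
\end{align*}
where the conventions $A_{n,-1}=1$, $A_{n,0}=b_n$, $B_{n,-1}=0$, $B_{n,0}=1$ are those of (\ref{K5}).

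For the two base cases I would simply substitute. When $k=0$, (\ref{K5}) gives $A_{n,0}=b_n$ and $B_{n,0}=1$, so the right-hand sides of (\ref{K100}) and (\ref{K101}) collapse to $b_nA_{n-1}+a_nA_{n-2}=A_n$ and $b_nB_{n-1}+a_nB_{n-2}=B_n$ by (\ref{Int1}) and (\ref{Int2}). When $k=1$, I would use $A_{n,1}=a_{n+1}+b_nb_{n+1}$ together with $B_{n,1}=A_{n+1,0}=b_{n+1}$ (the latter from (\ref{K11})); a one-line regrouping of $(a_{n+1}+b_nb_{n+1})A_{n-1}+a_nb_{n+1}A_{n-2}=a_{n+1}A_{n-1}+b_{n+1}A_n$ then recovers $A_{n+1}$, and likewise $B_{n+1}$, via (\ref{Int1}) and (\ref{Int2}).

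For the inductive step, assuming both identities for $k$ and for $k+1$, I would start from the defining recurrence $A_{n+k+2}=b_{n+k+2}A_{n+k+1}+a_{n+k+2}A_{n+k}$ of (\ref{Int1}), insert the inductive expressions for $A_{n+k+1}$ and $A_{n+k}$, and collect the coefficients of $A_{n-1}$ and of $a_nA_{n-2}$. Those coefficients are precisely $b_{n+k+2}A_{n,k+1}+a_{n+k+2}A_{n,k}$ and $b_{n+k+2}B_{n,k+1}+a_{n+k+2}B_{n,k}$, which by the displayed $k$-recurrences equal $A_{n,k+2}$ and $B_{n,k+2}$. This delivers (\ref{K100}) for $k+2$, and the identical computation starting from (\ref{Int2}) delivers (\ref{K101}).

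I do not expect a genuine obstacle: this is a routine second-order induction, and the only thing demanding care is bookkeeping, namely aligning the two base cases $k=0,1$ with the conventions of (\ref{K5}) and invoking the shifted recurrence for $A_{n,k}$ and $B_{n,k}$ with the correct indices. As an aside, one could instead read both identities off in a single stroke from the Euler-type splitting identity for continuants, cutting $A_{n+k}=K\binom{a_1,\ldots,a_{n+k}}{b_0,\ldots,b_{n+k}}$ after the $(n-1)$-st column so that the diagonal blocks become $A_{n-1}$ and $A_{n,k}$ and the cross term contributes $a_nA_{n-2}B_{n,k}$; but since that splitting identity is not among the facts already established in the excerpt, the self-contained induction above is preferable.
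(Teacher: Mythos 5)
Your proof is correct, and everything it invokes is available in the paper, but it runs the induction in the opposite direction from the paper's own argument. You fix $n$ and induct forward on the tail length $k$ (two base cases $k=0,1$, second-order step), the engine being the last-row expansion (\ref{K3}) relabeled so that $A_{n,k}$ and $B_{n,k}$ satisfy in $k$ the same recurrence as $A_{n+k}$ and $B_{n+k}$; in effect you observe that both sides of (\ref{K100})--(\ref{K101}) obey one second-order recurrence in $k$ with matching initial data, a clean power-iteration-style viewpoint that also explains why the coefficients $A_{n,k}$, $B_{n,k}$ arise. The paper, following Offer, instead fixes $m=n+k$, sets $f(n):=A_{n,m-n}A_{n-1}+a_{n}B_{n,m-n}A_{n-2}$, and proves the invariance $f(n+1)=f(n)$, so that a single endpoint evaluation $f(m)=A_{m}$ finishes the proof; the engine there is the first-column expansion (\ref{K15}) together with the identity (\ref{K11}) $B_{k,n}=A_{k+1,n-1}$. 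What your route buys: it needs only the familiar tail recurrence (\ref{K3}) and bypasses (\ref{K15}) entirely. What the paper's route buys: a one-parameter telescoping with no $k=1$ base case and less bookkeeping in the step. The only delicate spots in your write-up are the degenerate instances of the shifted recurrence --- the $k=1$ case for $A_{n,k}$ and the $k=1,2$ cases for $B_{n,k}$ rest on the conventions (\ref{K5}) (and on (\ref{K11})) rather than on (\ref{K3}) itself --- but these check out exactly as you indicate, so the proof is complete; the paper pays an analogous small toll when extending (\ref{K15}) down to $n=-1$. Your closing aside is also apt: the proposition \emph{is} the block-splitting (Euler) identity for the continuant (\ref{K1}) cut after the $(n-1)$-st column, which would give both formulas in one stroke, and you were right to avoid it since that identity is not established in the paper.
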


\begin{proof}
We follow \cite[Prop.1]{Offer}. Let $m=n+k.$ We have to prove that%
\begin{align}
A_{m}  &  =A_{n,m-n}A_{n-1}+a_{n}B_{n,m-n}A_{n-2}\qquad\left(  1\leq n\leq
m\right)  ,\label{K17}\\
B_{m}  &  =A_{n,m-n}B_{n-1}+a_{n}B_{n,m-n}B_{n-2}\qquad\left(  1\leq n\leq
m\right)  . \label{K18}%
\end{align}
Let $f(n):=A_{n,m-n}A_{n-1}+a_{n}B_{n,m-n}A_{n-2}.$ Then for $1\leq n<m$%
\begin{align*}
f(n+1)  &  =A_{n+1,m-n-1}A_{n}+a_{n+1}B_{n+1,m-n-1}A_{n-1}\\
&  =A_{n+1,m-n-1}(b_{n}A_{n-1}+a_{n}A_{n-2})+a_{n+1}B_{n+1,m-n-1}A_{n-1}\\
&  =\left(  b_{n}A_{n+1,m-n-1}+a_{n+1}B_{n+1,m-n-1}\right)  A_{n-1}%
+a_{n}A_{n+1,m-n-1}A_{n-2}\\
&  =\left(  b_{n}A_{n+1,m-n-1}+a_{n+1}A_{n+2,m-n-2}\right)  A_{n-1}%
+a_{n}B_{n,m-n}A_{n-2}\\
&  =A_{n,m-n}A_{n-1}+a_{n}B_{n,m-n}A_{n-2}=f(n).
\end{align*}
Hence $f(n)=f(m)=A_{n,0}A_{n-1}+a_{n}B_{n,0}A_{n-2}=b_{n}A_{n-1}+a_{n}%
A_{n-2}=A_{n},$ which proves (\ref{K17}). The proof of (\ref{K18}) is similar:
just replace $A_{n},$ $A_{n-1}$ and $A_{n-2}$ by $B_{n},$ $B_{n-1}$ and
$B_{n-2}$ respectively.
\end{proof}

As $A_{n,0}=b_{n}$ and $B_{n,0}=1$ by (\ref{K5}), (\ref{Int1}) and
(\ref{Int2}) result from (\ref{K100}) and (\ref{K101}) respectively by taking
$k=0$.

\begin{proposition}
\label{PropPerron}Let $n\geq1$ and $k\geq0.$ Then%
\begin{equation}
A_{n+k}B_{n-1}-A_{n-1}B_{n+k}=\left(  -1\right)  ^{n-1}a_{1}a_{2}\cdots
a_{n}B_{n,k}. \label{K200}%
\end{equation}

\end{proposition}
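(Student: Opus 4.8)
The plan is to substitute the closed-form expressions for $A_{n+k}$ and $B_{n+k}$ furnished by Proposition \ref{PropOffer} directly into the left-hand side of (\ref{K200}) and simplify. Writing
\[
A_{n+k}=A_{n,k}A_{n-1}+a_{n}B_{n,k}A_{n-2},\qquad B_{n+k}=A_{n,k}B_{n-1}+a_{n}B_{n,k}B_{n-2},
\]
I would form the combination $A_{n+k}B_{n-1}-A_{n-1}B_{n+k}$ and expand. The two contributions carrying the factor $A_{n,k}A_{n-1}B_{n-1}$ cancel against each other, leaving
\[
A_{n+k}B_{n-1}-A_{n-1}B_{n+k}=a_{n}B_{n,k}\bigl(A_{n-2}B_{n-1}-A_{n-1}B_{n-2}\bigr).
\]

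The remaining step is to identify the parenthesised factor. It is precisely the determinant-type quantity of (\ref{Int7}) with the index shifted down by one: replacing $n$ by $n-1$ in (\ref{Int7}) gives $A_{n-1}B_{n-2}-A_{n-2}B_{n-1}=(-1)^{n-2}a_{1}\cdots a_{n-1}$, so that $A_{n-2}B_{n-1}-A_{n-1}B_{n-2}=(-1)^{n-1}a_{1}\cdots a_{n-1}$. Multiplying through by $a_{n}B_{n,k}$ then yields exactly $(-1)^{n-1}a_{1}a_{2}\cdots a_{n}B_{n,k}$, which is (\ref{K200}).

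The only point requiring a little care is the base case $n=1$, for which the shifted identity (\ref{Int7}) is being invoked at index $0$, outside its stated range $n\geq1$. There I would instead verify the factor directly from the initial data (\ref{Int01}): $A_{-1}B_{0}-A_{0}B_{-1}=1\cdot1-b_{0}\cdot0=1$, in agreement with the empty-product convention $a_{1}\cdots a_{0}=1$ and $(-1)^{0}=1$. Thus nothing beyond this boundary observation is needed, and I expect this tiny check to be the main---indeed the only---obstacle in an otherwise immediate computation. As a consistency check, taking $k=0$ recovers (\ref{Int7}) itself, since $B_{n,0}=1$ by (\ref{K5}).
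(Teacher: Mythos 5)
Your proof is correct and is essentially identical to the paper's: both substitute the expressions from Proposition \ref{PropOffer} into the left-hand side, cancel the $A_{n,k}A_{n-1}B_{n-1}$ terms to obtain $a_{n}B_{n,k}\left(A_{n-2}B_{n-1}-A_{n-1}B_{n-2}\right)$, and then conclude via (\ref{Int7}) for $n\geq2$ and via the initial data (\ref{Int01}) for $n=1$. Your explicit handling of the $n=1$ boundary case is exactly the point the paper also singles out, so nothing further is needed.
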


\begin{proof}
For $n\geq1$ and $k\geq0$, let $h(n,k):=A_{n+k}B_{n-1}-A_{n-1}B_{n+k}.$Then by
(\ref{K100}) and (\ref{K101})%
\begin{align*}
h(n,k)  &  =\left(  A_{n,k}A_{n-1}+a_{n}B_{n,k}A_{n-2}\right)  B_{n-1}%
-A_{n-1}\left(  A_{n,k}B_{n-1}+a_{n}B_{n,k}B_{n-2}\right) \\
&  =a_{n}B_{n,k}\left(  A_{n-2}B_{n-1}-A_{n-1}B_{n-2}\right)  .
\end{align*}
If $n=1,$ then (\ref{K200}) is true by (\ref{Int01}). If $n\geq2,$
(\ref{K200}) results from (\ref{Int7}).
\end{proof}

From (\ref{K200}) we deduce immediately the following generalization of
(\ref{K777}):%
\begin{equation}
\frac{A_{n+k}}{B_{n+k}}-\frac{A_{n-1}}{B_{n-1}}=\left(  -1\right)  ^{n-1}%
a_{1}a_{2}\cdots a_{n}\frac{B_{n,k}}{B_{n-1}B_{n+k}}\qquad\left(  n\geq
1,k\geq0\right)  . \label{K201}%
\end{equation}
Since $B_{n,0}=1$ by (\ref{K5}), (\ref{Int7}) and (\ref{K777}) result from
(\ref{K200}) and (\ref{K201}) respectively by taking $k=0$. As observed by
Perron, for $k=1$ we obtain by (\ref{K11}) and (\ref{K5})%
\begin{equation}
\frac{A_{n+1}}{B_{n+1}}-\frac{A_{n-1}}{B_{n-1}}=\left(  -1\right)  ^{n-1}%
a_{1}a_{2}\cdots a_{n}\frac{b_{n+1}}{B_{n-1}B_{n+1}}\qquad\left(
n\geq1\right)  . \label{K300}%
\end{equation}
This formula can also be deduced directly from (\ref{Int7}) and (\ref{Int2})
by writing%
\begin{align*}
\frac{A_{n+1}}{B_{n+1}}-\frac{A_{n-1}}{B_{n-1}}  &  =\frac{A_{n+1}}{B_{n+1}%
}-\frac{A_{n}}{B_{n}}+\frac{A_{n}}{B_{n}}-\frac{A_{n-1}}{B_{n-1}}\\
&  =\left(  -1\right)  ^{n-1}a_{1}a_{2}\cdots a_{n}\left(  \frac{1}%
{B_{n-1}B_{n}}-\frac{a_{n+1}}{B_{n}B_{n+1}}\right) \\
&  =\left(  -1\right)  ^{n-1}a_{1}a_{2}\cdots a_{n}\frac{B_{n+1}%
-a_{n+1}B_{n-1}}{B_{n-1}B_{n}B_{n+1}}.
\end{align*}

\section{Perron's proof of Tietze theorem}

\label{SecTietze}It makes use of two lemmas.

\begin{lemma}
\label{LemmaT2}Let $\alpha$ be the semi-regular continued fraction defined by
(\ref{Int21}). Then%
\begin{equation}
1\leq B_{k,n}\leq B_{k-1,n+1}\leq B_{k+n}\qquad\left(  k\geq1,n\geq0\right)  .
\label{K250}%
\end{equation}

\end{lemma}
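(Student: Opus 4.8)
The plan is to work entirely with the denominator continuants $B_{k,n}$ through the recurrence (\ref{K15b}), namely $B_{k,n+2}=b_{k+1}B_{k+1,n+1}+a_{k+2}B_{k+2,n}$, together with the base values $B_{k,0}=1$ and $B_{k,1}=b_{k+1}$ obtained from (\ref{K5}) and (\ref{K11}). The three inequalities in (\ref{K250}) then split into three tasks: the positivity $1\le B_{k,n}$, the one-step monotonicity $B_{k,n}\le B_{k-1,n+1}$, and the global bound $B_{k-1,n+1}\le B_{k+n}$. My main observation is that the first two are entangled and should be proved by a single simultaneous induction on $n$, after which the third follows by iterating the second.

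For the simultaneous induction I would carry, for each $n$, two statements: $P(n)$, that $B_{k,n}\ge1$ for all $k\ge0$, and $Q(n)$, that $B_{k+1,n}\le B_{k,n+1}$ for all $k\ge0$ (which is exactly $B_{k,n}\le B_{k-1,n+1}$ after shifting $k$). The base cases $P(0),P(1),Q(0)$ are immediate from $B_{k,0}=1$ and $B_{k,1}=b_{k+1}\ge1$. The decisive input is the hypothesis $b_{k+1}+a_{k+2}\ge1$ of (\ref{Int22}): since $a_{k+2}\in\{-1,1\}$, in the only dangerous case $a_{k+2}=-1$ it forces $b_{k+1}\ge2$. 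At each stage I would prove $P(n)$ first and $Q(n)$ second, so that every dependency points to strictly earlier data and the induction is well founded.

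Concretely, to prove $Q(n)$ I would substitute $B_{k,n+1}=b_{k+1}B_{k+1,n}+a_{k+2}B_{k+2,n-1}$ (a reindexing of (\ref{K15b})) and reduce $B_{k,n+1}-B_{k+1,n}$ to $(b_{k+1}-1)B_{k+1,n}+a_{k+2}B_{k+2,n-1}$; when $a_{k+2}=1$ every summand is nonnegative by $P$, and when $a_{k+2}=-1$ the factor $b_{k+1}-1\ge1$ combined with $Q(n-1)$, which gives $B_{k+2,n-1}\le B_{k+1,n}$, makes the whole expression nonnegative. To prove $P(n)$ for $n\ge2$ I would likewise expand $B_{k,n}=b_{k+1}B_{k+1,n-1}+a_{k+2}B_{k+2,n-2}$: the case $a_{k+2}=1$ is trivial, while the case $a_{k+2}=-1$ uses $b_{k+1}\ge2$ and $Q(n-2)$ (giving $B_{k+2,n-2}\le B_{k+1,n-1}$) to obtain $B_{k,n}\ge(b_{k+1}-1)B_{k+1,n-1}\ge1$.

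Finally, the rightmost inequality $B_{k-1,n+1}\le B_{k+n}=B_{0,k+n}$ would follow by telescoping $Q$: applying $B_{j+1,m}\le B_{j,m+1}$ successively with $(j,m)=(k-2,n+1),(k-3,n+2),\dots,(0,n+k-1)$ lowers the first index to $0$ while raising the second to $n+k$. I expect the main obstacle to be organizational rather than analytic, namely fixing the order $P(n)$ before $Q(n)$ so that the simultaneous induction is well founded and tracking the index shifts in (\ref{K15b}); the single genuinely essential idea is that $b_{k+1}+a_{k+2}\ge1$ upgrades to $b_{k+1}\ge2$ precisely when $a_{k+2}=-1$, which is what rescues both arguments in their only hard case.
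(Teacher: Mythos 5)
Your proof is correct and follows essentially the same route as the paper: both rest on the recurrence (\ref{K15b}), establish positivity of the $B_{k,n}$ together with the antidiagonal monotonicity $B_{k,n}\le B_{k-1,n+1}$ by induction on $n$, and then telescope that monotonicity down to $B_{0,k+n}=B_{k+n}$. The only cosmetic difference is that you split into the cases $a_{k+2}=\pm1$ (using that $a_{k+2}=-1$ forces $b_{k+1}\ge2$), whereas the paper absorbs both cases into the single inequality $B_{k-1,n+1}-B_{k,n}\ge\left(b_{k}-1\right)\left(B_{k,n}-B_{k+1,n-1}\right)$, which follows at once from $b_{k}+a_{k+1}\ge1$ and $B_{k+1,n-1}\ge0$ and is then iterated down to $B_{k+n,0}-B_{k+n+1,-1}=1$.
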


\begin{proof}
\cite[p.149-150]{Per} Replacing $k$ by $k-1$ and $n$ by $n-1$ in (\ref{K15b})
yields%
\begin{equation}
B_{k-1,n+1}-B_{k,n}=\left(  b_{k}-1\right)  B_{k,n}+a_{k+1}B_{k+1,n-1}%
\qquad\left(  k\geq1,n\geq0\right)  . \label{KKK}%
\end{equation}
We prove first by induction on $n$ that $B_{k,n}\geq0$ for $k\geq0$ and
$n\geq-1.$ This is true for $n=-1$ and $n=0$ for all $k\geq0$ by (\ref{K5}).
Assuming that it is true for $n-1$ and $n,$ we get by (\ref{Int22})%
\[
B_{k-1,n+1}-B_{k,n}\geq\left(  b_{k}-1\right)  \left(  B_{k,n}-B_{k+1,n-1}%
\right)  \qquad\left(  k\geq1\right)  ,
\]
which proves that $B_{k,n+1}\geq0$ for all $k\geq0.$ Hence we have
\begin{equation}
B_{k-1,n+1}-B_{k,n}\geq\left(  b_{k}-1\right)  \left(  B_{k,n}-B_{k+1,n-1}%
\right)  \qquad\left(  k\geq1,n\geq0\right)  . \label{KK}%
\end{equation}
However $B_{k+n,0}=1$ and $B_{k+n+1,-1}=0$ by (\ref{K5}), so that by an easy
induction
\[
B_{k-1,n+1}\geq B_{k,n}\qquad\left(  k\geq1,n\geq0\right)  .
\]
This proves (\ref{K250}) since $B_{k+n,0}=1$ and $B_{0,n+k}=B_{n+k}.$
\end{proof}

\begin{lemma}
\label{LemmaT1}Let $\alpha$ be the semi-regular continued fraction defined by
(\ref{Int21}). Then%
\[
\lim_{n\rightarrow\infty}B_{n}=+\infty.
\]

\end{lemma}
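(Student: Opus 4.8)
The plan is to analyze the second-order recurrence (\ref{Int1})--(\ref{Int2}) for $B_n$ directly, exploiting the two consequences of (\ref{Int22}) that do the real work: if $a_n=-1$ then $b_{n-1}+a_n\ge1$ forces $b_{n-1}\ge2$, while if $b_n=1$ then $b_n+a_{n+1}\ge1$ forces $a_{n+1}=1$. First I would record that $B_n\ge1$ for every $n\ge0$, which is immediate from Lemma \ref{LemmaT2} (with $k=1$, together with $B_0=1$) and the initial values (\ref{K5}). I then set $d_n:=B_n-B_{n-1}$, so that $d_0=1$ and, by (\ref{Int1}), $d_n=(b_n-1)B_{n-1}+a_nB_{n-2}$.

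The main step is to show that $B_n$ can decrease only in a very restricted way. If $a_n=1$ then $d_n=(b_n-1)B_{n-1}+B_{n-2}\ge B_{n-2}\ge1>0$, and if $a_n=-1$ with $b_n\ge2$ then $d_n\ge B_{n-1}-B_{n-2}=d_{n-1}$. Consequently a strict decrease $d_n<0$ can occur only when $a_n=-1$, and a descent argument rules it out when $b_n\ge2$: such a $d_n<0$ would force $d_{n-1}<0$, hence $a_{n-1}=-1$, hence (as $a_n=-1$) $b_{n-1}\ge2$, and iterating pushes the inequality down to $d_1=b_1-1\ge0$, a contradiction. Therefore every decrease is an isolated dip with $a_n=-1$ and $b_n=1$; in that case $a_{n+1}=1$, so the dip is immediately followed by an additive step with $B_{n+1}=b_{n+1}B_n+B_{n-1}\ge B_n+B_{n-1}$. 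Since step $n-1$ cannot itself be a dip (a dip there would force $a_n=1$), we have $d_{n-1}\ge0$, whence the recovery $B_{n+1}\ge 2B_{n-1}-B_{n-2}\ge B_{n-1}$. A short case check then shows that $\max(B_n,B_{n+1})$ is nondecreasing in $n$.

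It remains to upgrade monotonicity to divergence. I would first argue that $(B_n)$ cannot be bounded: if it were, then each additive step, giving $B_n\ge B_{n-1}+1$, could occur only finitely often, so $a_n=-1$ for all large $n$; but then no $b_n=1$ is possible (it would force the next $a$ to equal $1$), so $b_n\ge2$ eventually, whence $d_n$ is nondecreasing and in fact $d_n\ge1$ from some index on, forcing at least linear growth and contradicting boundedness. Hence $\max(B_n,B_{n+1})$ is nondecreasing and unbounded, so it tends to $+\infty$. Finally, the only indices at which $B_n$ dips below this running maximum are the isolated dips, each flanked by the large values $B_{n-1}$ and $B_{n+1}$ and satisfying $B_n=d_{n-1}$; since the increments $d_{n-1}$ themselves diverge (along the additive steps, where $d_{n-1}\ge B_{n-3}$, and within any final subtractive block, where they are nondecreasing), one concludes $\liminf_n B_n=+\infty$, i.e. $B_n\to+\infty$.

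The main obstacle is clearly the subtractive case $a_n=-1$, where $B_n=b_nB_{n-1}-B_{n-2}$ can genuinely fall below $B_{n-1}$; the crux is the descent argument showing that such drops are isolated single dips forced by $b_n=1$, and hence always followed by a compensating additive step. This is exactly the place where the coupling $b_n+a_{n+1}\ge1$ in (\ref{Int22}) is indispensable, and I expect it to be the only delicate part of the proof.
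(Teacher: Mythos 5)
Your route is genuinely different from the paper's: Perron works with the continuants $B_{k,n}$, telescopes the inequality (\ref{KKK4}) to get $B_{k+n}\geq k+1$ when $a_{k+1}=1$ and $B_{k}\geq k+1$ when $a_{k+1}=-1$, and concludes by a two-case split; you instead analyze the recurrence directly through the increments $d_{n}=B_{n}-B_{n-1}$. Much of your analysis is correct, modulo one systematic slip: since $b_{n}$ ranges over the real interval $\left[1,+\infty\right[$, the alternative to $b_{n}\geq2$ is $b_{n}<2$, not $b_{n}=1$; fortunately $b_{n}<2$ still forces $a_{n+1}=1$ via (\ref{Int22}), so your descent argument (a dip $d_{n}<0$ forces $a_{n}=-1$ and $b_{n}<2$), the isolation of dips, the recovery $B_{n+1}\geq B_{n-1}$, and the monotonicity of $M_{n}=\max\left(B_{n},B_{n+1}\right)$ all survive. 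Your unboundedness step is compressed --- a bounded non-monotone sequence can perfectly well satisfy $B_{n}\geq B_{n-1}+1$ infinitely often --- but your own running maximum repairs it in one line: at an additive step $B_{n}\geq B_{n-1}+B_{n-2}\geq M_{n-2}+1$, so infinitely many additive steps force $M_{n}\rightarrow\infty$, while a final all-subtractive tail has entry increment $\geq1$ and nondecreasing increments, giving linear growth.

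The genuine gap is in the last step, the upgrade to $\liminf_{n}B_{n}=+\infty$. At a dip, $B_{n}=b_{n}B_{n-1}-B_{n-2}$ with $b_{n}\in\left[1,2\right[$ gives $B_{n}\geq d_{n-1}$, not $B_{n}=d_{n-1}$ (harmless); what is not harmless is your justification that the pre-dip increments $d_{n-1}$ diverge. The bound $d_{n-1}\geq B_{n-3}$ at an additive step is circular: the pattern dip at $n-3$, additive at $n-2$ and $n-1$, dip at $n$ is consistent with (\ref{Int22}), and then your chain only bounds each dip value below by an \emph{earlier dip value}, which is compatible with all dips being equal to, say, $1$; likewise ``nondecreasing within a subtractive block'' yields no growth by itself. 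The missing idea is to use the dip itself: $a_{n}=-1$ forces $b_{n-1}\geq2$, so at the additive step preceding a dip $d_{n-1}=\left(b_{n-1}-1\right)B_{n-2}+B_{n-3}\geq B_{n-2}+B_{n-3}$; since dips are isolated, at least one of $B_{n-2},B_{n-3}$ equals the running maximum ($B_{n-2}=M_{n-3}$ if $n-2$ is not a dip, $B_{n-3}=M_{n-4}$ if it is), whence $B_{n}\geq M_{n-4}\rightarrow\infty$, and the same entry bound, propagated by monotonicity of the increments, handles a subtractive block ending in a dip. With that insertion your proof closes and stands as a legitimate alternative to Perron's; as written, the final step does not prove the lemma.
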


\begin{proof}
\cite[p.150-151]{Per} Let $k\geq1$ be fixed. If $a_{k+1}=1,$ then by
(\ref{KKK})%
\begin{equation}
B_{k-1,n+1}-B_{k,n}\geq B_{k+1,n-1}\geq1\qquad\left(  n\geq1\right)  .
\label{KKK2}%
\end{equation}
On the other hand, if $a_{k+1}=-1,$ then by (\ref{KKK}) and (\ref{Int22})%
\begin{equation}
B_{k-1,n+1}-B_{k,n}\geq B_{k,n}-B_{k+1,n-1}\qquad\left(  n\geq0\right)  .
\label{KKK3}%
\end{equation}
So in both cases%
\begin{equation}
B_{k-1,n+1}-B_{k,n}\geq\min\left(  1,B_{k,n}-B_{k+1,n-1}\right)  .
\label{KKK4}%
\end{equation}
Consequently, if $a_{k+1}=1,$ by an easy induction we see that%
\[
B_{k-m,n+m}-B_{k-m+1,n+m-1}\geq1\qquad\left(  1\leq m\leq k\right)  .
\]
Summing for $m=1$ to $k$ yields $B_{0,n+k}-B_{k,n}\geq k$ and therefore%
\begin{equation}
a_{k+1}=1\quad\Rightarrow\quad B_{k+n}\geq k+1\qquad\left(  k\geq
1,n\geq1\right)  . \label{KKK10}%
\end{equation}
If $a_{k+1}=-1,$ then $B_{k-1,1}-B_{k,0}\geq1$ by taking $n=0$ in
(\ref{KKK3}). Therefore by (\ref{KKK4})%
\[
B_{k-m,m}-B_{k-m+1,m-1}\geq1\qquad\left(  1\leq m\leq k\right)  .
\]
Hence summing again for $m=1$ to $k$ yields $B_{0,k}-B_{k,0}\geq k,$ so that
\begin{equation}
a_{k+1}=-1\quad\Rightarrow\quad B_{k}\geq k+1\qquad\left(  k\geq1\right)  .
\label{KKK20}%
\end{equation}
Now if $a_{k}=-1$ for all large $n,$ then clearly $\lim_{k\rightarrow\infty
}B_{k}=+\infty$ by (\ref{KKK20}). On the other hand, if there exist infinitely
many $k$ such that $a_{k}=1,$ then $\lim_{k\rightarrow\infty}B_{k}=+\infty$ by
(\ref{KKK10}) and the proof of Lemma \ref{LemmaT1} is complete.\smallskip
\end{proof}

Now we prove Tietze theorem by following \cite[p.151]{Per}. By (\ref{K201})
and (\ref{K250}), we have%
\[
\left\vert \frac{A_{n+k}}{B_{n+k}}-\frac{A_{n-1}}{B_{n-1}}\right\vert
\leq\frac{B_{n,k}}{B_{n-1}B_{n+k}}\leq\frac{1}{B_{n-1}}\qquad\left(
n\geq1,k\geq0\right)  .
\]
As $\lim_{n\rightarrow\infty}B_{n}=\infty$ by Lemma \ref{LemmaT1},
$A_{n}/B_{n}$ is a Cauchy sequence, which proves Tietze theorem.\smallskip

\begin{remark}
Two different alternative proofs of Lemma \ref{LemmaT1}, not using
continuants, can be found in \cite[Lem.3]{DS2} and in \cite[Th.1(i)]{Offer}.
\end{remark}

\section{Proof of Theorem \ref{StolzTh}}

\label{SecStolz}We will need the following lemma, which is closely connected
to the method of power iteration in numerical analysis.

\begin{lemma}
\label{LemPower}Let $\left(  u_{0},v_{0}\right)  \in\mathbb{C}^{2}%
\smallsetminus\left(  0,0\right)  $ and%
\[
M=\left(
\begin{array}
[c]{cc}%
a & c\\
b & d
\end{array}
\right)  ,\quad\left(  a,b,c,d\right)  \in\mathbb{C}^{4},\quad b\neq0,\quad
ad-bc\neq0.
\]
Let $\lambda_{1}$ and $\lambda_{2}$ with $\left\vert \lambda_{1}\right\vert
\geq\left\vert \lambda_{2}\right\vert $ be the eigenvalues of $M,$ and let
$(u_{n},v_{n})$ be defined recursively by%
\[
\left(
\begin{array}
[c]{c}%
u_{n+1}\\
v_{n+1}%
\end{array}
\right)  =M\left(
\begin{array}
[c]{c}%
u_{n}\\
v_{n}%
\end{array}
\right)  \quad\left(  n\geq0\right)  .
\]
Define $\left(  x_{1},x_{2}\right)  $ by $\lambda_{1}=bx_{1}+d$ and
$\lambda_{2}=bx_{2}+d$. Then:\smallskip

(i) If $\left\vert \lambda_{1}\right\vert >\left\vert \lambda_{2}\right\vert
,$ $v_{n}\neq0$ for all large $n$ and%
\begin{align}
\lim_{n\rightarrow\infty}\frac{u_{n}}{v_{n}}  &  =x_{1}\quad\text{if\quad
}u_{0}-v_{0}x_{2}\neq0,\label{C1}\\
\lim_{n\rightarrow\infty}\frac{u_{n}}{v_{n}}  &  =x_{2}\quad\text{if\quad
}u_{0}-v_{0}x_{2}=0. \label{C2}%
\end{align}

(ii) If $\left\vert \lambda_{1}\right\vert =\left\vert \lambda_{2}\right\vert
$ and $\lambda_{1}\neq\lambda_{2},$ $u_{n}/v_{n}$ is divergent, except if
\[
u_{0}-v_{0}x_{1}=0\quad or\quad u_{0}-v_{0}x_{2}=0.
\]

(iii) If $\lambda_{1}=\lambda_{2},$ $v_{n}\neq0$ for all large $n$ and
$\lim_{n\rightarrow\infty}u_{n}/v_{n}=x_{1}.$
\end{lemma}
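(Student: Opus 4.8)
The plan is to diagonalize (or Jordan-decompose) $M$ and track the behaviour of $u_n/v_n$ explicitly through the eigenvalue expansions. The key observation is that the quantities $x_1,x_2$ defined by $\lambda_i = bx_i + d$ are precisely the slopes $u/v$ of the two eigenvectors of $M$. Indeed, writing the characteristic relation for an eigenvector $(u,v)^{\top}$ with eigenvalue $\lambda$, the second-row equation gives $bu + dv = \lambda v$, i.e. $u/v = (\lambda - d)/b = x$. So an eigenvector for $\lambda_i$ may be taken as $(x_i,1)^{\top}$, and the line it spans is exactly the locus $u - v x_i = 0$. This immediately explains why the exceptional initial conditions in the statement are phrased as $u_0 - v_0 x_i = 0$: they say that the starting vector lies along an eigenvector.

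First I would set up coordinates. In the generic case where $\lambda_1 \neq \lambda_2$, the eigenvectors $(x_1,1)^{\top}$ and $(x_2,1)^{\top}$ are linearly independent (since $x_1 \neq x_2$, as $b \neq 0$), so I can write
\begin{equation}
\binom{u_0}{v_0} = \alpha \binom{x_1}{1} + \beta \binom{x_2}{1},
\label{PlanDecomp}
\end{equation}
and then $(u_n,v_n)^{\top} = \alpha\lambda_1^{n}(x_1,1)^{\top} + \beta\lambda_2^{n}(x_2,1)^{\top}$. Solving the linear system \eqref{PlanDecomp} gives $\beta(x_2 - x_1) = u_0 - v_0 x_1$ and $\alpha(x_1 - x_2) = u_0 - v_0 x_2$, so the coefficients $\alpha,\beta$ vanish exactly under the exceptional conditions. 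From the explicit formula I can compute
\[
\frac{u_n}{v_n} = \frac{\alpha\lambda_1^{n} x_1 + \beta\lambda_2^{n} x_2}{\alpha\lambda_1^{n} + \beta\lambda_2^{n}} = \frac{\alpha x_1 + \beta (\lambda_2/\lambda_1)^{n} x_2}{\alpha + \beta(\lambda_2/\lambda_1)^{n}}.
\]

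For part (i), $\left|\lambda_2/\lambda_1\right| < 1$ forces $(\lambda_2/\lambda_1)^{n} \to 0$; provided $\alpha \neq 0$ (equivalently $u_0 - v_0 x_2 \neq 0$) the denominator is eventually nonzero and the ratio tends to $x_1$, giving \eqref{C1}. If instead $u_0 - v_0 x_2 = 0$ then $\alpha = 0$ and the vector lies on the second eigenline, so $u_n/v_n \equiv x_2$, yielding \eqref{C2} (here one checks $v_n = \beta\lambda_2^n \neq 0$ since $\beta \neq 0$ and $\lambda_2 \neq 0$). For part (ii), when $|\lambda_1| = |\lambda_2|$ but $\lambda_1 \neq \lambda_2$, the ratio $(\lambda_2/\lambda_1)^{n}$ has constant modulus $1$ and winds around without converging unless one of $\alpha,\beta$ vanishes; divergence then follows from showing the sequence has at least two distinct limit points. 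Part (iii) is the confluent case $\lambda_1 = \lambda_2$, where I would use the Jordan form: either $M$ is already scalar (then $u_n/v_n$ is constant $= u_0/v_0$, and one argues this equals $x_1$) or $M$ has a single Jordan block, in which case $(u_n,v_n)^{\top}$ grows like $\lambda_1^{n}$ times a vector whose direction converges to the unique eigenvector $(x_1,1)^{\top}$, forcing $u_n/v_n \to x_1$.

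The main obstacle I expect is the divergence argument in part (ii): showing non-convergence rigorously requires more than noting that $(\lambda_2/\lambda_1)^{n}$ fails to converge, since one must rule out accidental cancellation and confirm that the Möbius-type expression $u_n/v_n$ genuinely oscillates. I would handle this by observing that $u_n/v_n = x_1 + (x_2 - x_1)/(1 + (\alpha/\beta)(\lambda_1/\lambda_2)^{n})$, so $u_n/v_n$ converges iff $(\lambda_1/\lambda_2)^{n}$ converges in $\mathbb{C} \cup \{\infty\}$; writing $\lambda_1/\lambda_2 = e^{i\theta}$ with $\theta \not\equiv 0 \pmod{2\pi}$, the sequence $e^{in\theta}$ either is dense on the unit circle (irrational $\theta/2\pi$) or cycles through finitely many distinct values (rational $\theta/2\pi$), and in both cases has multiple limit points, so $u_n/v_n$ cannot converge. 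The secondary subtlety is verifying in part (iii) that the constant or limiting slope is $x_1$ rather than some other value; this is where the identification of $(x_i,1)^{\top}$ as the eigenvector direction, established at the outset, does the work.
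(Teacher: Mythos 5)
Your proof is correct and follows essentially the same route as the paper's: you identify $(x_1,1)^{\top}$ and $(x_2,1)^{\top}$ as the eigenvectors via the second-row equation, decompose the initial vector (your $\alpha,\beta$ are the paper's $\mu_1,\mu_2$, with the same vanishing criteria $u_0-v_0x_2=0$, $u_0-v_0x_1=0$), reduce case (ii) to the same M\"obius-type expression in $e^{in\theta}$ --- where your dense-orbit/finite-cycle argument, together with counting $v_n=0$ infinitely often as divergence, makes explicit what the paper states tersely --- and handle case (iii) by the Jordan block, as the paper does. The one repair needed is in (iii): the scalar subcase cannot occur, since a scalar matrix has $b=0$ contrary to hypothesis, and your parenthetical claim there that the constant ratio $u_0/v_0$ ``equals $x_1$'' would actually be false for generic $(u_0,v_0)$ --- so you should exclude that subcase (noting $b\neq0$ forces a single Jordan block) rather than attempt to argue it.
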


\begin{proof}
We note first that $\lambda_{1}\neq0$ and $\lambda_{2}\neq0$ since $\det
M=ad-bc\neq0.$ Let $f$ be the linear transformation of $\mathbb{C}^{2}$ whose
matrix in the standard basis is $M.$ Let $e_{1},$ $e_{2},$ and $w_{n}$
$\left(  n\geq0\right)  $ be defined by%
\[
e_{1}=\left(
\begin{array}
[c]{c}%
x_{1}\\
1
\end{array}
\right)  ,\quad e_{2}=\left(
\begin{array}
[c]{c}%
x_{2}\\
1
\end{array}
\right)  ,\quad w_{n}=\left(
\begin{array}
[c]{c}%
u_{n}\\
v_{n}%
\end{array}
\right)  .
\]
It is easy to check that $Me_{1}=\lambda_{1}e_{1}$ and $Me_{2}=\lambda
_{2}e_{2}.$ Hence $e_{1}$ and $e_{2}$ are eigenvectors of $M$ and $f$ and they
form a basis of $\mathbb{C}^{2}$ if $\lambda_{1}\neq\lambda_{2}$. In this
case, define $\mu_{1}$ and $\mu_{2}$ by%
\begin{equation}
w_{0}=\mu_{1}e_{1}+\mu_{2}e_{2}. \label{Init}%
\end{equation}

We observe that%
\[
\mu_{1}=0\Leftrightarrow\left\vert
\begin{array}
[c]{cc}%
u_{0} & x_{2}\\
v_{0} & 1
\end{array}
\right\vert =0\Leftrightarrow u_{0}-v_{0}x_{2}=0
\]
and similarly $\mu_{2}=0\Leftrightarrow u_{0}-v_{0}x_{1}=0.$ It results from
(\ref{Init}) that
\begin{equation}
w_{n}=f^{n}\left(  w_{0}\right)  =\mu_{1}\lambda_{1}^{n}e_{1}+\mu_{2}%
\lambda_{2}^{n}e_{2}\quad\left(  n\geq0\right)  . \label{Wn}%
\end{equation}
Assume that $\left\vert \lambda_{1}\right\vert >\left\vert \lambda
_{2}\right\vert .$ If $\mu_{1}\neq0,$ we can write%
\[
w_{n}=\mu_{1}\lambda_{1}^{n}\left(  e_{1}+\frac{\mu_{2}}{\mu_{1}}\left(
\frac{\lambda_{2}}{\lambda_{1}}\right)  ^{n}e_{2}\right)  ,
\]
which proves that $v_{n}\neq0$ for all large $n$ and (\ref{C1}) holds. If
$\mu_{1}=0,$ then (\ref{C2}) holds by (\ref{Wn}), which proves \textit{(i).
}Now assume that $\left\vert \lambda_{1}\right\vert =\left\vert \lambda
_{2}\right\vert $ and $\lambda_{1}\neq\lambda_{2}.$ Then $\lambda_{2}%
=\lambda_{1}e^{i\theta}$ for some $\theta\in\left]  0,2\pi\right[  $ and by
(\ref{Wn}) we see that%
\begin{equation}
w_{n}=\lambda_{1}^{n}\left(  \mu_{1}e_{1}+\mu_{2}e^{in\theta}e_{2}\right)
\quad\left(  n\geq0\right)  . \label{Wn2}%
\end{equation}
So if $\mu_{1}=0$ or $\mu_{2}=0$ the sequence $u_{n}/v_{n}$ is well-defined
and convergent. If $\mu_{1}\neq0$ and $\mu_{2}\neq0,$ then either $v_{n}=0$
for infinitely many $n$ and $u_{n}/v_{n}$ is not defined, or%
\[
\frac{u_{n}}{v_{n}}=\frac{\mu_{1}x_{1}+\mu_{2}e^{in\theta}x_{2}}{\mu_{1}%
+\mu_{2}e^{in\theta}}=x_{1}+\left(  x_{2}-x_{1}\right)  \frac{\mu_{2}}{\mu
_{1}e^{-in\theta}+\mu_{2}},
\]
which is divergent since $x_{2}\neq x_{1},$ $\mu_{1}\neq0,$ $\mu_{2}\neq0$ and
$\theta\in\left]  0,2\pi\right[  .$ This proves \textit{(ii)}. Finally, if
$\lambda_{1}=\lambda_{2},$ there exists a basis $\left(  e_{1},e_{2}^{\prime
}\right)  $ of $\mathbb{C}^{2}$ such that $f(e_{2}^{\prime})=e_{1}+\lambda
_{1}e_{2}^{\prime}$ and so
\begin{equation}
w_{0}=\mu_{1}e_{1}+\mu_{2}e_{2}^{\prime} \label{Init1}%
\end{equation}
for some $(\mu_{1},\mu_{2})\in\mathbb{C}^{2}$. As $f^{n}(e_{2}^{\prime
})=n\lambda_{1}^{n-1}e_{1}+\lambda_{1}^{n}e_{2}^{\prime},$ we get%
\[
w_{n}=f^{n}(w_{0})=\lambda_{1}^{n-1}\left(  \left(  \mu_{1}\lambda_{1}%
+n\mu_{2}\right)  e_{1}+\lambda_{1}\mu_{2}e_{2}^{\prime}\right)  .
\]
If $\mu_{2}=0,$ we see that $u_{n}/v_{n}=x_{1}.$ If $\mu_{2}\neq0,$%
\[
\lim_{n\rightarrow\infty}\frac{u_{n}}{v_{n}}=\lim_{n\rightarrow\infty}%
\frac{\left(  \mu_{1}\lambda_{1}+n\mu_{2}\right)  x_{1}+\lambda_{1}\mu
_{2}x_{2}}{\left(  \mu_{1}\lambda_{1}+n\mu_{2}\right)  +\lambda_{1}\mu_{2}%
}=x_{1}%
\]
and \textit{(iii)} is proved.\smallskip
\end{proof}

Now we prove Theorem \ref{StolzTh}. Let $q\in\left\{  0,\ldots,p-1\right\}  .$
Replacing $n$ by $p$ and $k$ by $np+q$ in (\ref{K100}) and (\ref{K101}) yields%
\begin{align*}
A_{\left(  n+1\right)  p+q}  &  =A_{p-1}A_{p,np+q}+a_{p}A_{p-2}B_{p,np+q},\\
B_{\left(  n+1\right)  p+q}  &  =B_{p-1}A_{p,np+q}+a_{p}B_{p-2}B_{p,np+q}.
\end{align*}
However by (\ref{K88}) and (\ref{K8}) we have since $a_{n}$ and $b_{n}$ are
\textit{p}-periodic%
\[
A_{p,np+q}=K\binom{a_{p+1},\ldots,a_{\left(  n+1\right)  p+q}}{b_{p}%
,\ldots,b_{\left(  n+1\right)  p+q}}=K\binom{a_{1},\ldots,a_{np+q}}%
{b_{0},\ldots,b_{np+q}}=A_{np+q}.
\]
Similarly, by (\ref{K89}) and (\ref{K9}), we see that $B_{p,np+q}=B_{np+q}.$
Hence for $0\leq q\leq p-1$%
\begin{align}
A_{\left(  n+1\right)  p+q}  &  =A_{p-1}A_{np+q}+a_{p}A_{p-2}B_{np+q}%
,\label{S1}\\
B_{\left(  n+1\right)  p+q}  &  =B_{p-1}A_{np+q}+a_{p}B_{p-2}B_{np+q}.
\label{S2}%
\end{align}
\qquad

Assume that $B_{p-1}=0.$ Then by (\ref{S2}) with $q=p-1$ we have%
\[
B_{\left(  n+1\right)  p+p-1}=a_{p}B_{p-2}B_{np+p-1}=\left(  a_{p}%
B_{p-2}\right)  ^{n+1}B_{p-1}=0,
\]
so that the sequence $B_{n}$ vanishes infinitely often and therefore $\alpha$
is divergent.\smallskip

Assume that $B_{p-1}\neq0.$ Then (\ref{S1}) and (\ref{S2}) can be written as%
\[
\left(
\begin{array}
[c]{c}%
A_{\left(  n+1\right)  p+q}\\
B_{\left(  n+1\right)  p+q}%
\end{array}
\right)  =\left(
\begin{array}
[c]{cc}%
A_{p-1} & a_{p}A_{p-2}\\
B_{p-1} & a_{p}B_{p-2}%
\end{array}
\right)  \left(
\begin{array}
[c]{c}%
A_{np+q}\\
B_{np+q}%
\end{array}
\right)  =M\left(
\begin{array}
[c]{c}%
A_{np+q}\\
B_{np+q}%
\end{array}
\right)  .
\]
We observe that $\det M=\left(  -1\right)  ^{p}a_{1}\cdots a_{p}\neq0,$ so
that we can apply Lemma \ref{LemPower} with $u_{n}=u_{n,q}=A_{np+q}$ and
$v_{n}=v_{n,q}=B_{np+q}$ for every $q\in\left\{  0,\ldots,p-1\right\}  ,$ in
such a way that%
\[
u_{0,q}-v_{0,q}x_{1}=A_{q}-x_{1}B_{q},\quad u_{0,q}-v_{0,q}x_{2}=A_{q}%
-x_{2}B_{q}.
\]
As $\lambda_{1}+\lambda_{2}=A_{p-1}+a_{p}B_{p-2},$ for $q=p-1$ we see that%
\begin{align}
u_{0,p-1}-v_{0,p-1}x_{1}  &  =A_{p-1}-x_{1}B_{p-1}=A_{p-1}-\left(  \lambda
_{1}-a_{p}B_{p-2}\right)  =\lambda_{2}\neq0,\label{Init11}\\
u_{0,p-1}-v_{0,p-1}x_{2}  &  =A_{p-1}-x_{2}B_{p-1}=A_{p-1}-\left(  \lambda
_{2}-a_{p}B_{p-2}\right)  =\lambda_{1}\neq0. \label{Init12}%
\end{align}
We distinguish three cases.\smallskip

\textit{Case 1.} $\lambda_{1}=\lambda_{2}.$ By Lemma \ref{LemPower}
\textit{(iii)} we see that $\lim_{n\rightarrow\infty}A_{np+q}/B_{np+q}=x_{1}$
for every $q\in\left\{  0,\ldots,p-1\right\}  ,$ so that $\alpha$ is
convergent and $\alpha=x_{1}.\smallskip$

\textit{Case 2.} $\left\vert \lambda_{1}\right\vert =\left\vert \lambda
_{2}\right\vert $ and $\lambda_{1}\neq\lambda_{2}.$ Then the sequence
$A_{np+p-1}/B_{np+p-1}$ is divergent by Lemma \ref{LemPower} \textit{(ii),
}(\ref{Init11})\textit{ }and (\ref{Init12}). Therefore\textit{ }$\alpha$ is
divergent.\smallskip

\textit{Case 3.} $\left\vert \lambda_{1}\right\vert >\left\vert \lambda
_{2}\right\vert .$ Then the sequence $A_{np+p-1}/B_{np+p-1}$ converges to
$x_{1}$ by Lemma \ref{LemPower} \textit{(i) }and (\ref{Init12}). Moreover
for\textit{ }$0\leq q\leq p-2$ the sequence $A_{np+p-1}/B_{np+p-1}$ converges
to $x_{1}$ if $A_{q}-x_{2}B_{q}\neq0$ and to $x_{2}$ if $A_{q}-x_{2}B_{q}=0.$
As $x_{1}\neq x_{2}$ since $\lambda_{1}\neq\lambda_{2},$ $\alpha$ is
convergent if and only if $A_{q}-x_{2}B_{q}\neq0$ for all $0\leq q\leq p-2,$
and the proof of Theorem \ref{StolzTh} is complete.

\begin{remark}
In Case 3, the sequence $A_{n}/B_{n}$ has two different limit points
$x_{1}\neq x_{2}$ when $A_{q}-x_{2}B_{q}=0$ for some $q\in\left\{
0,\ldots,p-2\right\}  $ (\cite{Thiele},\cite[Satz 2.39]{Per2},\cite[Theorem
3.3 (B)]{JoTh}). This is known as Thiele's oscillations.
\end{remark}

\section{Proofs of Theorem \ref{ThGal} and Corollary \ref{CorGal}}

\label{SecGalois1}First we prove Theorem \ref{ThGal}. By Proposition
\ref{PropInv}, we have%
\[
A_{p}^{\prime}=A_{p},\quad B_{p}^{\prime}=A_{p-1},\quad A_{p-1}^{\prime}%
=B_{p},\quad B_{p-1}^{\prime}=B_{p-1}.
\]
Hence the matrix $M^{\prime}$ associated to $\alpha^{\prime}$ in Theorem
\ref{StolzTh} is%
\[
M^{\prime}:=\left(
\begin{array}
[c]{cc}%
A_{p-1}^{\prime} & A_{p}^{\prime}-b_{0}^{\prime}A_{p-1}^{\prime}\\
B_{p-1}^{\prime} & B_{p}^{\prime}-b_{0}^{\prime}B_{p-1}^{\prime}%
\end{array}
\right)  =\left(
\begin{array}
[c]{cc}%
B_{p} & A_{p}-b_{0}B_{p}\\
B_{p-1} & A_{p-1}-b_{0}B_{p-1}%
\end{array}
\right)  .
\]
Its trace is $\operatorname*{tr}M^{\prime}=B_{p}+A_{p-1}-b_{0}B_{p-1}%
=\operatorname*{tr}M$ and its determinant is%
\[
\det M^{\prime}=A_{p-1}B_{p}-A_{p}B_{p-1}=\det M.
\]
So the eigenvalues of $M^{\prime}$ are exactly $\lambda_{1}^{\prime}%
=\lambda_{1}$ and $\lambda_{2}^{\prime}=\lambda_{2}.$ Now let $x_{1}^{\prime}$
and $x_{2}^{\prime}$ be defined by%
\[
\lambda_{1}^{\prime}=\lambda_{1}=x_{1}^{\prime}B_{p-1}^{\prime}+a_{p}^{\prime
}B_{p-2}^{\prime},\quad\lambda_{2}^{\prime}=\lambda_{2}=x_{2}^{\prime}%
B_{p-1}^{\prime}+a_{p}^{\prime}B_{p-2}^{\prime}.
\]
Since $b_{p}^{\prime}=b_{p}=b_{0,}$ we see by (\ref{Init11}) and
(\ref{Init12}) that%
\begin{align*}
\left(  b_{0}-x_{1}\right)  B_{p-1}^{\prime}+a_{p}^{\prime}B_{p-2}^{\prime}
&  =\left(  b_{0}-x_{1}\right)  B_{p-1}^{\prime}+B_{p}^{\prime}-b_{0}%
B_{p-1}^{\prime}=\lambda_{2},\\
\left(  b_{0}-x_{2}\right)  B_{p-1}^{\prime}+a_{p}^{\prime}B_{p-2}^{\prime}
&  =\left(  b_{0}-x_{2}\right)  B_{p-1}^{\prime}+B_{p}^{\prime}-b_{0}%
B_{p-1}^{\prime}=\lambda_{1}.
\end{align*}
Therefore $x_{1}^{\prime}=b_{0}-x_{2}$ and $x_{2}^{\prime}=b_{0}-x_{1},$ so
that Theorem \ref{ThGal} follows immediately from Theorem \ref{StolzTh}%
.\medskip

Now we prove Corollary \ref{CorGal}. As $\alpha=x_{1}$ is irrational,
(\ref{Cond123}) cannot hold and the reverse continued fraction $\alpha
^{\prime}$ of $\alpha$ converges to $b_{0}-x_{2}$ by Theorem \ref{ThGal}.
Moreover $\lambda_{1}$ is irrational by (\ref{X1}). Hence $\lambda_{1}$ is
quadratic irrational and $\lambda_{2}$ is the conjugate of $\lambda_{1}$ since
the characteristic polynomial of $M$ has integer coefficients. Consequently
$x_{2}$ is the conjugate of $x_{1}=\alpha$ by (\ref{X2}), which yields
$\alpha^{\prime}=b_{0}-x_{2}=b_{0}-\alpha^{\ast}.$ Corollary \ref{CorGal} is proved.

\end{document}